\newtheorem{theorem}{Theorem}[section]
\newtheorem{lemma}[theorem]{Lemma}
\newtheorem{proposition}[theorem]{Proposition}
\newtheorem{corollary}[theorem]{Corollary}
\theoremstyle{definition}
\newtheorem{definition}[theorem]{Definition}
\theoremstyle{remark}
\newtheorem{remark}[theorem]{Remark}
\def\F{\mathbb{F}}
\def\N{\mathbb{N}}
\def\Z{\mathbb{Z}}
\def\bbT{\mathbb{T}}
\def\s{\mathfrak s}
\DeclareMathOperator{\hfk}{HFK}
\DeclareMathOperator{\cfk}{CFK}
\DeclareMathOperator{\cf}{CF}
\DeclareMathOperator{\hfhat}{\widehat{HF}}
\DeclareMathOperator{\cfhat}{\widehat{CF}}
\DeclareMathOperator{\cfkhat}{\widehat{CFK}}
\DeclareMathOperator{\sgn}{sgn}
\newenvironment{lrcases}
{\begin{aligned}}
{\end{aligned}}
\author[L. Truong]{Linh Truong}
\thanks{The author was partially supported by NSF grant DMS-1606451.}
\address {Department of Mathematics, Columbia University, New York, NY 10027}
\email{ltruong@math.columbia.edu}
\numberwithin{equation}{section}
\title[A refinement of the surgery formula and concordance]{A refinement of the Ozsv\'ath-Szab\'o large integer surgery formula and knot concordance}
\begin{document}

\begin{abstract} 
We compute the knot Floer filtration induced by the $(n,1)$--cable of the meridian of a knot in the manifold obtained by large integer surgery along the knot. We give a formula in terms of the original knot Floer complex of the knot in $S^3$. As an application, we show that the concordance invariant $a_1(K)$ of Hom can equivalently be defined in terms of filtered maps on the Heegaard Floer homology groups induced by the two-handle attachment cobordism of surgery along a knot in $S^3$. 
\end{abstract}

\maketitle

\section{Introduction}


Let $S^3_t(K)$ denote the manifold constructed as Dehn surgery along $K \subset S^3$ with surgery coefficient $t$. In \cite{HFK} Ozsv\'ath and Szab\'o construct a chain homotopy equivalence between certain subquotient complexes of the full knot Floer chain complex $\cfk^\infty(S^3, K)$ and Heegaard Floer chain complexes $\cfhat(S^3_t(K), \s_m)$ for sufficiently large integers $t$ for each spin$^c$ structure $\s_m$. This equivalence is known as the \emph{large integer surgery formula}.

The meridian $\mu$ of $K$ naturally lies inside of the knot complement $S^3 \setminus K$ and the surgered manifold $S^3_t(K)$. The meridian $\mu$ induces a filtration on $\cfhat(S^3_t(K), \s_m)$ for each spin$^c$ structure $\s_m$. In \cite{hedden} Hedden gives a formula for the filtered complex $\cfkhat(S^3_t(K), \mu, \s_m)$ in terms of $\cfk^\infty(S^3, K)$ for sufficiently large $t$. As an application of this formula, Hedden computes the knot Floer homology of Whitehead doubles and the Ozsv\'ath-Szab\'o concordance invariant $\tau$ of Whitehead doubles. In \cite{hedden-kim-livingston} Hedden, Kim, and Livingston generalize Hedden's formula by computing the full knot Floer complex $\cfk^\infty(S^3_t(K), \mu, \s_m)$ in terms of $\cfk^\infty(S^3, K)$ for sufficiently large $t$. As an application to knot concordance, they show that the subgroup of topologically slice knots of the concordance group contains a $\mathbb{Z}_2^\infty$ subgroup. 

\begin{figure}[tbh!]
\begin{center}
\begin{overpic}[scale=.3]{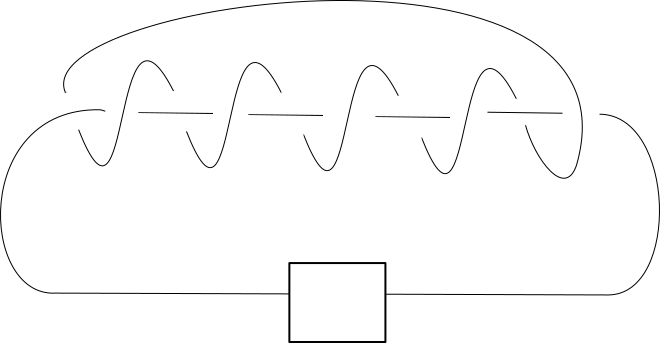}
\put (48,5) {$K$}
\put (8, 46) {$\mu_n$}
\end{overpic}
\end{center}
\caption[]{The two-component link $\mu_n$ and $K$ for $n = 5$}
\label{fig:link}
\end{figure}

We refine the theorems of Ozsv\'ath-Szab\'o, Hedden and Hedden-Kim-Livingston to determine the filtered chain homotopy type of $\cfk^\infty(S^3_t(K), \mu_n)$, where $\mu_n$ denotes the $(n,1)$--cable of the meridian of $K$, viewed as a knot in $S^3_t(K)$. See Figure~\ref{fig:link}. 
For each spin$^c$ structure $\s_m$, we show that the complex $\cfk^\infty(S^3_{t}(K), \mu_n, \s_m)$ is isomorphic to $\cfk^\infty(S^3,K)$, but endowed with a different $\Z \oplus \Z$ filtration and an overall shift in the homological grading.

\begin{theorem}
\label{theorem:infinityfiltration}
Let  $K$ be a knot in $S^3$ and fix $m, n \in \Z$. Then there exists $T = T(m, n) >0$ such that for all $t >T$,
the complex $\cfk^\infty(S^3_{t}(K), \mu_n, \mathfrak{s}_m)$ is isomorphic to $\cfk^\infty(S^3, K)[\epsilon]$ as an unfiltered complex, where $[\epsilon]$ denotes a grading shift that depends only on $m$ and $t$. 
Given a generator $[x,i,j]$ for $\cfk^\infty(S^3, K)$, the $\Z \oplus \Z$ filtration level of the same generator, viewed as a chain in $\cfk^\infty(S^3_{t}(K), \mu_n, \mathfrak{s}_m)$, is given by:

\begin{eqnarray*}
\mathcal{F}([x,i,j]) = \begin{cases}
[i,i] &\text{if } j \leq m+i
\\
[j-m, j-m-k] &\text{if } j = m+i +k, \ \text{where }1 \leq k < n
\\
[j-m, j-m-n] &\text{if } j \geq m+i +n
\end{cases}
\end{eqnarray*}
\end{theorem}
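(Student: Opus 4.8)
The plan is to follow the winding-region strategy of Hedden \cite{hedden} and of Hedden--Kim--Livingston \cite{hedden-kim-livingston}, extending their computation from the meridian $\mu$ to its $(n,1)$--cable $\mu_n$. The starting point is a doubly-pointed Heegaard diagram $(\Sigma, \alphas, \betas, w, z)$ for $(S^3, K)$. One builds a diagram for $S^3_t(K)$ by replacing the distinguished $\beta$--curve with a curve realizing the surgery slope and winding it $t$ times through a \emph{winding region} adjacent to the basepoints. For $t$ sufficiently large, the Ozsv\'ath--Szab\'o large integer surgery formula \cite{HFK}, together with its refinement to $\cfk^\infty$ as in \cite{hedden-kim-livingston}, identifies in each spin$^c$ structure $\s_m$ the underlying complex of $\cfk^\infty(S^3_t(K),\s_m)$ with $\cfk^\infty(S^3, K)$ up to a grading shift $[\epsilon]$ depending only on $m$ and $t$, and the generator $[x,i,j]$ corresponds to one whose algebraic ($U$--power) filtration level is $\max(i, j-m)$. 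This already supplies the unfiltered isomorphism and the \emph{first} coordinate of $\mathcal{F}$, which is intrinsic to the ambient manifold and independent of $n$. The entire new content is therefore the computation of the \emph{second} coordinate, the Alexander filtration induced by $\mu_n$.

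Next I would produce a doubly-pointed diagram for $(S^3_t(K), \mu_n)$ by inserting a second pair of basepoints $(w', z')$ into the winding region so that the arc they determine realizes the $(n,1)$--cable of $\mu$. Since $\mu_n$ is $n$ parallel copies of $\mu$ closed up with framing $1$, in the winding region it appears as $n$ parallel strands, so the connecting arc crosses the relevant curves $n$ times rather than once. The key point to verify is that, for large $t$, this placement leaves the surgery-formula identification of generators and of the differential untouched and records only an additional filtration.

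The heart of the argument is then the filtration computation. For generators $\mathbf{x}, \mathbf{y}$ joined by a domain $\phi$, the change in the $\mu_n$--Alexander grading is $n_{z'}(\phi) - n_{w'}(\phi)$, which I would evaluate by locating, for each generator $[x,i,j]$, the winding-region coordinate of its distinguished component. A generator with $j - i = m$ sits at the center of the winding; as $j - (m+i)$ increases from $0$ to $n$, the distinguished component sweeps across the successive strands of the cable, so the count $n_{z'} - n_{w'}$ climbs one unit at a time from $0$ to $n$ and then stabilizes. This produces exactly the three regimes $j \le m+i$, $m+i < j < m+i+n$, and $j \ge m+i+n$, and yields the second coordinate $\max(i, j-m-n)$. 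Assembling the two coordinates gives the stated piecewise formula, equivalently $\mathcal{F}([x,i,j]) = [\max(i, j-m), \max(i, j-m-n)]$.

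I expect the main obstacle to be the explicit bookkeeping in the winding region for the cable: one must track the $n$ parallel strands together with the framing curve and confirm that $n_{z'} - n_{w'}$ saturates at $n$ precisely over the window where $j - (m+i)$ runs from $0$ to $n$, rather than being off by one, so that the middle case has the correct width. A secondary technical point is pinning down the threshold $T(m,n)$: one must check that the stabilization is uniform enough that, for $t > T$, inserting the cable's basepoints introduces no spurious generators or differentials and that $[\epsilon]$ genuinely depends only on $m$ and $t$ and not on $n$. Finally, some care is needed to confirm that the two resulting filtrations are indeed the algebraic and the Alexander filtrations of $\cfk^\infty(S^3_t(K), \mu_n, \s_m)$, and in the correct order.
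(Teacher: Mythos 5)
Your proposal follows essentially the same route as the paper's proof: the unfiltered isomorphism and the first ($U$-power) filtration come from the Ozsv\'ath--Szab\'o map $\Phi_m$, and the Alexander filtration induced by $\mu_n$ is computed exactly as you describe, by placing a basepoint $n$ regions along the winding region and evaluating $n_{z}-n_{w}$ on domains according to the winding-region position of each generator, giving the same saturation at $n$ and the same assembled formula $[\max(i,j-m),\max(i,j-m-n)]$ (the paper packages this computation as a lemma at the level of $\widehat{\cfk}$ and then extends to $\cfk^\infty$ by $U$-equivariance, i.e.\ $A(U\cdot x)=A(x)-1$). The one correction: the paper does not insert a fresh pair $(w',z')$ but keeps the original basepoint $w$ and adds a single new basepoint $z_n$, so that the pair $(w,z_n)$ defines $\mu_n$; this is not merely cosmetic, since the Alexander filtration must live on the complex defined by the \emph{same} $w$ appearing in $\Phi_m$ and in the surgery-formula differential, so your $w'$ must be placed in the region of $w$ (equivalently, discarded).
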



As a corollary, the $\Z$--filtered complex $\cfkhat(S^3_{t}(K), \mu_n, \s_m)$ is isomorphic to a subquotient complex of $\cfk^\infty(S^3,K)$, endowed with an $(n+1)$ step filtration $\mathcal{F}$:
\begin{eqnarray*}
0 \subseteq C_{\{ i < -n+1, j = m \}} 
\subseteq
\cdots 
\subseteq  C_{\{ i < 0, j = m \} } 
\subseteq C_{\{\max(i,j-m)=0\}} 
\end{eqnarray*}
This filtration is illustrated in Figure \ref{fig:filtration} in the case $n = 3$.
\begin{corollary}
\label{cor:hatfiltration}
Let $K \subset S^3$ be a knot, and fix $m, n \in \mathbb{Z}$. Then there exists $T = T(m, n) > 0$ such that for all $t > T$, the $\Z$--filtration on $\widehat{CF}(S^3_{t}(K), \mathfrak{s}_m)$ induced by $\mu_n \subset  S^3_t(K) $ is isomorphic to the filtered chain homotopy type of the $(n+1)$ step filtration on $C\{\max(i,j-m)=0\}$ described above. 
\end{corollary}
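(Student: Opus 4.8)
The plan is to deduce Corollary~\ref{cor:hatfiltration} from Theorem~\ref{theorem:infinityfiltration} by passing from the full complex to its hat flavor, with essentially no new geometry: all of the holomorphic-curve input is already packaged in the theorem, and what remains is bookkeeping. Recall that for a knot $J$ in a three-manifold $Y$ the complex $\cfhat(Y)$ is recovered as the subquotient $C\{i=0\}$ of $\cfk^\infty(Y,J)$, where $i$ denotes the algebraic filtration associated to $J$, and that the Alexander filtration by $J$ descends to this subquotient to give $\cfkhat(Y,J)$. Applying this with $Y=S^3_t(K)$ and $J=\mu_n$, I would identify $\cfkhat(S^3_t(K),\mu_n,\s_m)$ with the subquotient of $\cfk^\infty(S^3_t(K),\mu_n,\s_m)$ on which the first coordinate of the bifiltration $\mathcal{F}$ vanishes, equipped with the $\Z$--filtration read off from the second coordinate of $\mathcal{F}$. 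Since the grading shift $[\epsilon]$ only translates homological degrees, it has no effect on the filtered chain homotopy type and may be ignored.

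First I would determine the underlying subquotient. Writing a generator as $[x,i,j]$ with $j-i=A(x)$, I would run through the three cases of $\mathcal{F}$ in Theorem~\ref{theorem:infinityfiltration} and solve for the locus where the first coordinate equals zero. A direct check shows this locus consists of the generators $[x,0,j]$ with $j\le m$ together with the generators $[x,i,m]$ with $i\le 0$; that is, it is exactly the staircase $C\{\max(i,j-m)=0\}$. This simultaneously reproves, at the level of the underlying complex, the Ozsv\'ath--Szab\'o large surgery identification $\cfhat(S^3_t(K),\s_m)\cong C\{\max(i,j-m)=0\}$, consistent with the corollary's statement.

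Next I would read off the induced filtration from the second coordinate of $\mathcal{F}$. On the vertical arm $\{i=0,\ j\le m\}$ this coordinate is $0$, and on the horizontal arm $\{j=m,\ i\le 0\}$ it equals $i$ when $-n<i\le 0$ and saturates at $-n$ once $i\le -n$; in all cases the induced filtration level is $\max(i,-n)$. Thus the sublevel where the second coordinate is at most $-\ell$ is precisely $C\{i\le-\ell,\ j=m\}=C\{i<-\ell+1,\ j=m\}$ for $1\le \ell\le n$, and the top level is the whole complex $C\{\max(i,j-m)=0\}$. This yields exactly the increasing $(n+1)$--step filtration
\begin{equation*}
0 \subseteq C_{\{i<-n+1,\,j=m\}} \subseteq \cdots \subseteq C_{\{i<0,\,j=m\}} \subseteq C_{\{\max(i,j-m)=0\}}
\end{equation*}
displayed before the corollary, giving an isomorphism of filtered complexes and hence of filtered chain homotopy types.

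The only points requiring care are the conventions: one must confirm that the first coordinate of $\mathcal{F}$ is the algebraic (rather than the Alexander) filtration for $\mu_n$, so that the locus where it vanishes really computes $\cfhat$, and that the flag is indexed in the increasing direction used in the display. The boundary cases $A(x)=m$ (the corner $[x,0,m]$, which must be assigned to filtration level $0$) and $A(x)=m+n$ (where the horizontal level first saturates at $-n$) should be checked explicitly to verify that no generator is double-counted or omitted. Since Theorem~\ref{theorem:infinityfiltration} supplies the essential content, this combinatorial matching is the main --- and only --- obstacle.
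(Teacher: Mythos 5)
Your combinatorial reduction is accurate: setting the first coordinate of $\mathcal{F}$ in Theorem~\ref{theorem:infinityfiltration} equal to zero does carve out exactly the generators $[x,0,j]$ with $j\le m$ and $[x,i,m]$ with $i\le 0$, i.e.\ $C\{\max(i,j-m)=0\}$, and the second coordinate restricts to $0$ on the vertical arm and $\max(i,-n)$ on the horizontal arm, which reproduces the $(n+1)$--step flag displayed before the corollary. This is precisely the specialization the introduction gestures at by calling the statement a ``corollary,'' and your cautionary remarks (that the first coordinate must be the algebraic filtration, and that the corner $[x,0,m]$ and the saturation at $i\le -n$ behave consistently) are the right things to check.

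However, this is the reverse of the paper's actual logical route, and within the paper as written your argument is circular. The statement of Corollary~\ref{cor:hatfiltration} reappears in the body as Theorem~\ref{hatfiltration}, which is proved \emph{first} and \emph{directly} by geometry: one places an extra basepoint $z_n$, $n$ regions from $w$ in the winding region of the Ozsv\'ath--Szab\'o triple diagram, observes that $(\Sigma,\alpha,\beta,w,z_n)$ is a doubly-pointed diagram for $(S^3_t(K),\mu_n)$, and computes the relative Alexander grading of winding-region generators via Lemma~\ref{lem:filtration}, by constructing explicit Whitney disks $\phi_{p,q}$ and counting the arcs $\delta_1,\dots,\delta_n$ in their boundaries. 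The proof of Theorem~\ref{theorem:infinityfiltration} then \emph{cites} Lemma~\ref{lem:filtration} to identify the Alexander filtration on each $i=\mathrm{const}$ slice of $\cf^\infty(S^3_t(K),\s_m)$; that is, the $\Z\oplus\Z$ statement is deduced from the hat-level computation, not conversely. So your proposal is a valid implication, but it can only serve as a proof of the corollary if Theorem~\ref{theorem:infinityfiltration} is first established independently of the hat result --- which in this paper it is not. The premise of your write-up, that ``all of the holomorphic-curve input is already packaged in the theorem'' and only bookkeeping remains, is therefore exactly what fails: the winding-region disk counts are the content of the corollary's proof, and the bookkeeping you perform is what the paper uses to pass in the opposite direction, from the hat filtration to the filtration on $\cfk^\infty(S^3_t(K),\mu_n,\s_m)$.
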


\begin{figure}[tbh!]
\begin{tikzpicture}[scale=.5]
	\fill[gray!12]  (5.4, 4.4) rectangle (-1, 3.6);
	\fill[gray!12]  (4.6, -.9) rectangle (5.4, 4.4);
	\draw[step=1, darkgray, very thin] (-.9, -.9) grid (5.5, 5.5);
	\begin{scope}[thin, black]
		\draw [->] (-1, 0) -- (5.6, 0);
		\draw [->] (5, -1) -- (5, 5.5);
	\end{scope}
	\begin{scope}[thick, densely dotted, blue]
		\draw [-] (4.6, 4.5) -- (-1, 4.5);
		\draw [-] (4.6, 3.5) -- (-1, 3.5);
		\draw [-] (4.6, 4.5) -- (4.6, 3.5);
	\end{scope}
	\begin{scope}[thick, densely dotted, red]
		\draw [-] (3.7, 4.6) -- (-1, 4.6);
		\draw [-] (3.7, 3.4) -- (-1, 3.4);
		\draw [-] (3.7, 4.6) -- (3.7, 3.4);
	\end{scope}
	\begin{scope}[thick, densely dotted, violet]
		\draw [-] (2.7, 4.7) -- (-1, 4.7);
		\draw [-] (2.7, 3.3) -- (-1, 3.3);
		\draw [-] (2.7, 4.7) -- (2.7, 3.3);
	\end{scope}
	
	\node [right] at (5,5) {$j$};
	\node [right] at (5.5,0) {$i$};
\end{tikzpicture}
\caption{$C\{\max(i,j-m)=0\}$ is the shaded region. The subregions bounded by the colored dots represent subcomplexes of the filtration $\mathcal{F}$ in the case $n = 3$. }
\label{fig:filtration}
\end{figure}
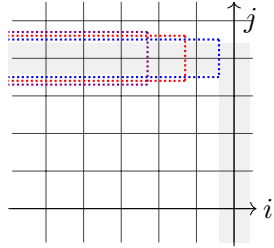

As an application, we show that the concordance invariant $a_1(K)$ of Hom \cite{Hom-concgroup} can equivalently be defined in terms of filtered maps on the Heegaard Floer homology groups induced by the two-handle attachment cobordism of surgery along a knot $K$ in $S^3$. The rationally null-homologous knot $\mu_n \subset S^3_t(K)$ induces a $\Z$-filtration of $\cfhat(S^3_t(K), \s_\tau)$ and $\cfhat(S^3_{-t}(K), \s_\tau)$, that is, a sequence of subcomplexes:
\[0 \subset \mathcal{F}_{bottom} \subset \mathcal{F}_{bottom+1} \subset \cdots \subset \mathcal{F}_{top-1} \subset \mathcal{F}_{top} = \cfhat(S^3_t(K), \s_\tau).\]
\[0 \subset \mathcal{F}'_{bottom} \subset \mathcal{F}'_{bottom+1} \subset \cdots \subset \mathcal{F}'_{top-1} \subset \mathcal{F}'_{top} = \cfhat(S^3_{-t}(K), \s_\tau).\]
Using the knot filtrations, an equivalent definition of $a_1(K)$ can be formulated in terms of the filtration $\mathcal{F}$ and $\mathcal{F}'$ induced by $\mu_n $ as a knot inside $S^3_t(K)$ and $S^3_{-t}(K)$.

\begin{theorem}
Let $n > 2g(K)$. 
For sufficiently large  surgery coefficient $t$, the concordance invariant $a_1(K)$ is equal to: \\
\label{a1}
\[ a_1(K) = \begin{cases}
\max \left\lbrace m \mid  \begin{lrcases}
 \cfhat(S^3_tK,\ s_\tau) / \mathcal{F}_{ \text{top}-1-m} \to \cfhat(S^3) 
\\
  \text{ induces a trivial map on homology }   
 \end{lrcases} \right \rbrace & \text{ if } \varepsilon(K) = -1, \\
\\
0 & \text{ if } \varepsilon(K) = 0, \\\\
\min \left\lbrace m \mid  \begin{lrcases}
 \cfhat(S^3)  \to  \mathcal{F}'_{ \text{bottom}+m} \subset \cfhat(S^3_{-t}K, \ \s_\tau) 
\\
  \text{ induces a trivial map on homology }   
 \end{lrcases} \right \rbrace & \text{ if } \varepsilon(K) = 1. 
\end{cases}
\]
\end{theorem}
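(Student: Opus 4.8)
The plan is to match the geometric description with Hom's algebraic definition of $a_1(K)$, using the concrete model of the $\mu_n$-filtration supplied by Theorem~\ref{theorem:infinityfiltration} and Corollary~\ref{cor:hatfiltration}. I would take the spin$^c$ index equal to $\tau(K)$, fix $n > 2g(K)$, and let $t$ be large enough that the surgery formulas for both $+t$ and $-t$ apply. By Corollary~\ref{cor:hatfiltration}, the filtration $\mathcal{F}$ induced by $\mu_n$ on $\cfhat(S^3_t(K),\mathfrak{s}_\tau)$ is the $(n+1)$-step filtration on the subquotient complex $C\{\max(i,j-\tau)=0\}$ of $\cfk^\infty(S^3,K)$, with successive levels the truncations $C\{i<\ell,\ j=\tau\}$, and a dual model gives the filtration $\mathcal{F}'$ for $-t$ surgery. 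The hypothesis $n > 2g(K)$ guarantees that every arrow of a reduced representative of $\cfk^\infty(S^3,K)$ is recorded by this filtration, so no information is lost in passing to the finite model.

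First I would identify the two-handle cobordism maps with the projection maps of the surgery formula. By the Ozsv\'ath--Szab\'o integer surgery formula, for large positive $t$ the cobordism map $\cfhat(S^3_t(K),\mathfrak{s}_\tau)\to\cfhat(S^3)$ is the vertical projection $\hat v\colon C\{\max(i,j-\tau)=0\}\to C\{i=0\}=\cfhat(S^3)$, and for large negative $t$ the map $\cfhat(S^3)\to\cfhat(S^3_{-t}(K),\mathfrak{s}_\tau)$ is the horizontal inclusion of $C\{j=\tau\}\cong\cfhat(S^3)$. The new point is that these maps are filtered with respect to the $\mu_n$-filtration; I would verify, using Theorem~\ref{theorem:infinityfiltration}, that passing to the quotient $\cfhat(S^3_t K,\mathfrak{s}_\tau)/\mathcal{F}_{\mathrm{top}-1-m}$ corresponds on the algebraic side to restricting the domain of $\hat v$ to the truncation $C\{i<\ell,\ j=\tau\}$ with $\ell$ determined by $m$, and dually for $\mathcal{F}'_{\mathrm{bottom}+m}$ and $\hat h$.

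Next I would translate the triviality conditions into Hom's definition. In each $\varepsilon$-case, $a_1(K)$ is the length of the distinguished horizontal or vertical arrow in a reduced basis of $\cfk^\infty(S^3,K)$, equivalently the number of units of the $i$- (resp. $j$-) filtration through which one must push before the distinguished generator of the vertical homology $H_*(\cfhat(S^3))=\mathbb{F}$ ceases to be detected. Under the identifications of the previous paragraph this length is exactly the depth in the $(n+1)$-step filtration at which the induced map on homology becomes trivial. For $\varepsilon(K)=-1$ this is recorded by the vertical map via $+t$ surgery and yields the $\max\{\,m\,\}$ formula; for $\varepsilon(K)=+1$ the mirror identity $\varepsilon(\overline K)=-\varepsilon(K)$ together with the duality between $+t$ and $-t$ surgeries converts this into the horizontal map and the $\min\{\,m\,\}$ formula; and for $\varepsilon(K)=0$ both $\hat v_*$ and $\hat h_*$ are already nontrivial at the extreme filtration level, so there is no nontrivial threshold and $a_1(K)=0$.

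The main obstacle I anticipate is the index bookkeeping in the third step: verifying that the abstract filtration levels $\mathrm{top}-1-m$ and $\mathrm{bottom}+m$ correspond exactly to the filtration shifts $k$ appearing in the three-case formula of Theorem~\ref{theorem:infinityfiltration}, with the correct normalization so that one step of the $(n+1)$-step filtration equals one unit of the $U$-power used in Hom's definition. I would also need to confirm that the overall homological grading shift $[\epsilon]$ does not alter which classes survive, and that the truncations $C\{i<\ell,\ j=\tau\}$ really are subcomplexes on the nose (not merely filtered subspaces), so that the quotient and sub maps are genuine chain maps. The identification of the cobordism maps with $\hat v$ and $\hat h$ is standard; it is this precise matching of filtration indices, rather than the maps themselves, that forms the heart of the argument.
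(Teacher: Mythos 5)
Your proposal is correct and follows essentially the same route as the paper: the paper's proof consists precisely of combining Theorem~\ref{hatfiltration} (your Corollary~\ref{cor:hatfiltration} step) with Proposition~\ref{prop:thickdiagonal} --- whose content is exactly your observation that $n > 2g(K)$ together with $|\tau| \leq g(K)$ forces the $(n+1)$-step $\mu_n$-filtration to coincide with the algebraic $i$-filtration, so nothing is lost in the finite model --- and with the identification from \cite{HFK} of the two-handle cobordism maps with the maps $G_{-s,\tau}$ and $F_{s,\tau}$ appearing in the extended algebraic definition of $a_1(K)$. The only cosmetic differences are that you obtain the $\varepsilon(K)=1$ case by mirroring from the $\varepsilon(K)=-1$ case rather than directly from the definition of $F_{s,\tau}$, and that the $\varepsilon(K)=0$ case is true by definition of the extended invariant rather than by your threshold argument.
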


This interpretation of the invariant $a_1(K)$ offers a  topological perspective that complements the original algebraic definition of $a_1(K)$. We will also include properties of the invariant $a_1(K)$ as well as computations of $a_1(K)$ for homologically thin knots and $L$--space knots.

\subsection*{Acknowledgements} 
The author thanks her advisors, Peter Ozsv\'ath and Zolt\'an Szab\'o, for their guidance. Adam Levine for reading the version of this work which appeared in the author's PhD thesis and for helpful comments. The author would also like to thank Matt Hedden, Jen Hom and Olga Plamenevskaya  for helpful conversations. 

\section{The knot Floer filtration of cables of the meridian in Dehn surgery along a knot}
\label{hfk_mu_n}
In this section we will refine the theorem of Ozsv\'ath-Szab\'o to determine the filtered chain homotopy type of the knot Floer complex of ($S^3_t(K)$, $\mu_n$). 

We begin by recalling the large integer surgery formula from Ozsv\'ath and Szab\'o \cite{HFK}. Let $(\Sigma_g$, $\alpha_1, \dots, \alpha_g$, $\gamma_1, \dots, \gamma_g$, $w$, $z)$ be a doubly-pointed Heegaard diagram for $\cfk^\infty(S^3,K)$, where 
\begin{itemize}
\item the curve $\gamma_g = \mu$ is a meridian of the knot $K$
\item the curve $\alpha_g$ is a longitude for $K$
\item there is a single intersection point in $\alpha_g \cap \gamma_g = x_0$
\item the basepoints $w$ and $z$ lie on either side of $\gamma_g$
\end{itemize}
Let $\beta = \{\gamma_1, \dots, \gamma_{g-1}, \lambda_t\}$ be the set of curves in $\gamma$, with $\gamma_g$ replaced by a longitude $\beta_g = \lambda_t$ winding $t$ times around $\mu$. Label the unique intersection point $\gamma_g \cap \beta_g=\theta$. The Heegaard triple diagram $(\Sigma, \alpha, \beta, \gamma, w, z)$ represents a cobordism between $S^3$ and $S^3_t(K)$. See Figure~\ref{fig:originalwindingregion}.

\begin{figure}[htb!]
\begin{tikzpicture}[scale=.75]
		\draw [-, thick] (-9, 0) -- (9, 0);
		\draw [-, thick] (-9,2 ) -- (9, 2);
		\draw [-, red, thick] (-9, 1) -- (9, 1);
		
		\draw [-, blue, thick] (-8,0 ) -- (-6, 2);
		\draw [-, blue, thick] (-6,0 ) -- (-4, 2);
		\draw [-, blue, thick] (-4,0 ) -- (-2, 2);
		\draw [-, blue, thick] (0, 1.5)  parabola (-2,0 );
		\draw [-, blue, thick] (0,1.5 )  parabola  (2, 2);
		\draw [-, blue, thick] (2,0 ) -- (4, 2);
		\draw [-, blue, thick] (4,0 ) -- (6, 2);
		\draw [-, blue, thick] (6,0 ) -- (8, 2);
		\draw [-, blue, thick] (9, .5)  parabola (8,0);
		\draw [-, blue, thick] (-9, 1.5)  parabola (-8,2);
		
		\draw [-, dotted, blue] (8,2) -- (8, 0);
		\draw [-, dotted, blue] (2,2) -- (2, 0);
		\draw [-, dotted, blue] (4,2) -- (4, 0);
		\draw [-, dotted, blue] (6,2) -- (6, 0);
		\draw [-, dotted, blue] (-2,2) -- (-2, 0);
		\draw [-, dotted, blue] (-4,2) -- (-4, 0);
		\draw [-, dotted, blue] (-6,2) -- (-6, 0);
		\draw [-, dotted, blue] (-8,2) -- (-8, 0);
		
		\draw [-, thick, orange] (-0.5, 0) -- (-0.5, 2);
		
		\filldraw (-1.15, 1) circle (1.5pt) node[below  ] {\tiny$x_{-1}$};
		 \filldraw (-3, 1) circle (1.5pt) node[below  ] {\tiny$x_{-2}$};
		 \filldraw (-5, 1) circle (1.5pt) node[below  ] {\tiny$x_{-3}$};
		 \filldraw (-7, 1) circle (1.5pt) node[below  ] {\tiny$x_{-4}$};
		 
		 \filldraw (3, 1) circle (1.5pt) node[below right] {\tiny$x_1$};
		 \filldraw (5, 1) circle (1.5pt) node[below right] {\tiny$x_2$};
		 \filldraw (7, 1) circle (1.5pt) node[below right] {\tiny$x_3$};
		 
		 \filldraw (-.5, 1) circle (1.5pt) node[below right] {\tiny$x_0$};
		 \filldraw (-0.5, 1.4) circle (1.5pt) node[ right] {\tiny$\theta$};
		 
		  \node [below left] at (-8.2, 1) {\tiny$\alpha_g$};
		  \node [below right] at (7.8, 2) {\tiny$\beta_g$};
		  \node [ right] at (-.6, .2) {\tiny$\gamma_g$};
		  
		  \filldraw (-.2, 1.8) circle (.5pt) node[ right] {\tiny$w$};
		  \filldraw (-1.2, 1.8) circle (.5pt) node[ right] {\tiny$z$};
\end{tikzpicture}
\caption[]{Local picture of the winding region of the Heegaard triple diagram $(\Sigma, \alpha, \beta, \gamma, w, z)$ for the cobordism between $S^3_tK$ and $S^3$}
\label{fig:originalwindingregion}
\end{figure}
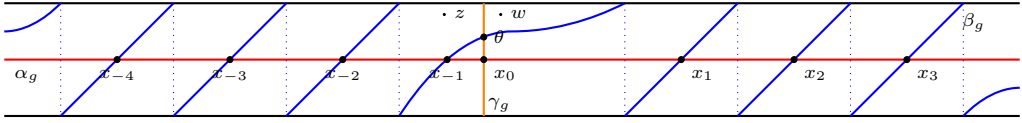

Let $C\{\text{max}(i, j-m)\} = 0$ denote the subquotient complex of $\text{CFK}^\infty(S^3, K)$ generated by triples $[x, i, j]$ with the $i$ and $j$ filtration levels satisfying the specified constraints. 
\begin{theorem}[\cite{HFK}] Let $K \subset S^3$ be a knot, and fix $m \in \mathbb{Z}$. Then there exists $T = T(m) > 0$ such that for all $t > T$, the chain map \[\Phi_m: \cfhat(S^3_t(K), \s_m) \to C\{\max(i, j-m) = 0\}\] defined by 
\[\Phi_m([x]) = \sum_{y \in T_\alpha \cap T_\gamma} \sum_{\{\psi\in \pi_2(x, \theta, y) \ | \ n_z(\psi) - n_w(\psi) =  m - \mathcal{F}(y), \ \mu(\psi) = 0 \} } [y, -n_w(\psi), m-n_z(\psi)]\]
induces an isomorphism of chain complexes. 
\end{theorem}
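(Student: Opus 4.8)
The plan is to realize $\Phi_m$ as the holomorphic-triangle-counting map associated to the triple diagram $(\Sigma, \alpha, \beta, \gamma, w, z)$, to equip it with an area filtration, and then to show that its leading-order part is a bijection of generators; a standard algebraic lemma then upgrades this to the assertion that $\Phi_m$ is an isomorphism of chain complexes. The first step is to pin down the generators on both sides. For $t$ large the longitude $\beta_g = \lambda_t$ winds $t$ times around $\gamma_g = \mu$, producing the intersection points $\dots, x_{-1}, x_0, x_1, \dots$ with $\alpha_g$ depicted in Figure~\ref{fig:originalwindingregion}. Every generator of $\cfhat(S^3_t(K), \s_m)$ is then represented by an intersection point $\{x, x_s\}$, where $x$ lies outside the winding region and hence corresponds to an intersection point of the original knot diagram $(\Sigma, \alpha, \gamma, w, z)$, i.e.\ to a generator $y$ of $\cfk^\infty(S^3,K)$. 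The constraint that $\{x, x_s\}$ represent $\s_m$ selects, for each such $y$ and for all $t$ sufficiently large, a single admissible winding index $s = s(y,m)$; this dependence is precisely the source of the threshold $T = T(m)$. I would record the resulting bijection between the generators $\{x, x_s\}$ and the triples $[y, i, j]$ that generate $C\{\max(i,j-m)=0\}$.

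Second, I would check that $\Phi_m$ is a chain map with image in $C\{\max(i,j-m)=0\}$. This is the usual argument for triangle maps: the ends of the one-dimensional moduli spaces of triangles $\psi$ with $\mu(\psi)=1$ (and the prescribed $n_z - n_w$ constraint) degenerate into a Maslov-index-zero triangle glued to a boundary-degeneration disk on either the $\alpha\gamma$ side or the $\alpha\beta$ side, and summing over these ends yields $\partial \circ \Phi_m + \Phi_m \circ \partial = 0$. The two filtration levels are read off from the multiplicities by $i = -n_w(\psi)$ and $j = m - n_z(\psi)$, and the defining constraint $n_z(\psi) - n_w(\psi) = m - \mathcal{F}(y)$ is exactly the condition $j - i = \mathcal{F}(y)$ that records the correct Alexander grading of $y$; one then verifies that this constraint and the subquotient condition $\max(i, j-m) = \max(-n_w, -n_z) = 0$ are compatible with the differential of $C\{\max(i,j-m)=0\}$.

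The technical heart of the argument, and the step I expect to be the main obstacle, is the model calculation in the winding region. Introducing the area (equivalently, energy) filtration, I would show that $\Phi_m$ is filtered and that its top-order part $\Phi_m^0$ counts only a single \emph{canonical smallest} triangle attached to each generator. Concretely, for each admissible pair $\{x, x_s\}$ one must prove that there is a unique lowest-area class $\psi_0 \in \pi_2(\{x,x_s\}, \theta, y)$, that $\mu(\psi_0) = 0$, and that its multiplicities satisfy $\min(n_w(\psi_0), n_z(\psi_0)) = 0$, so that $[y, -n_w(\psi_0), m - n_z(\psi_0)]$ lands exactly on the locus $\max(i, j-m) = 0$. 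Every other class connecting the same endpoints has strictly larger area and therefore contributes only to strictly lower filtration level. Establishing the existence, uniqueness, Maslov index, and area-minimality of $\psi_0$ uniformly as $t \to \infty$ --- and in particular invoking Gromov compactness to rule out triangles that escape the winding region once $t$ is large --- is the delicate part; it is here that the explicit geometry of the $t$-fold winding and the count of a unique small triangle are essential.

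Finally I would invoke the algebraic lemma that a filtered chain map whose associated graded map is a bijection of generators intertwining the induced (leading-order) differentials is automatically a filtered chain isomorphism. Since the model calculation shows that $\Phi_m^0$ realizes the bijection of generators set up in the first step and commutes with the leading differentials on both complexes, $\Phi_m$ is an isomorphism of chain complexes for all $t > T(m)$, which is the assertion of the theorem.
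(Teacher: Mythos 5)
The paper does not actually prove this statement: it is quoted from Ozsv\'ath--Szab\'o \cite{HFK} as background, so the only meaningful benchmark is their original argument, not an internal proof. Your proposal reconstructs that argument in its essentials --- localization of all $\s_m$-generators to the winding region for $t$ large, the bijection between winding-region generators $\{x, x_s\}$ and the triples $[y,i,j]$ generating $C\{\max(i,j-m)=0\}$, the canonical area-minimal small triangle with $\mu(\psi_0)=0$ and $\min(n_w(\psi_0),n_z(\psi_0))=0$ as the leading term of an area-filtered map, and the standard lemma upgrading an isomorphism on the associated graded to an isomorphism of chain complexes --- so it is correct in outline and follows essentially the same route as the source (the only slips are cosmetic: the degenerations proving the chain-map identity are strip breakings, not ``boundary degenerations,'' and the $\beta\gamma$-side breakings must also be accounted for, cancelling because $\theta$ is a cycle).
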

\begin{remark}
Here, as usual, the labeling of the spin$^c$ structures is determined by the condition that $\s_m$ can be extended over the cobordism $-W_t$ from $-S^3_t(K)$ to $-S^3$ associated to the two-handle addition along $K$ with framing $t$, yielding a spin$^c$ structure $\mathfrak{r}_m$ satisfying
\[ \langle c_1(\mathfrak{r}_m, [S]) \rangle + \  t = 2 m.\]
Above, $S$ denotes a surface in $W_t$ obtained from closing off a Seifert surface for $K$ in $S^3$ to produce a surface $S$ of square $t$.
\end{remark}

We refine the theorem of Ozsv\'ath-Szab\'o to determine the filtered chain homotopy type of the knot Floer complex of ($S^3_t(K)$, $\mu_n$). Consider the meridian $\mu = \mu_K$ of a knot $K$. The meridian $\mu$ naturally lies inside of the knot complement $S^3 \setminus K$ and the surgered manifold $S^3_t(K)$. For $n \in \N$, $\mu_n$ denotes the $(n,1)$--cable of $\mu_K$, and also lies inside $S^3 \setminus K$ and the surgered manifold $S^3_tK$. The knot $\mu_n$ is homologically equivalent to $n \cdot [\mu]$ in $H_1(S^3_t(K))$. When $n =1$, $\mu_1 = \mu$. See Figure~\ref{fig:link} for a picture of the two-component link $K \cup \mu_n$.

For all $n \geq 1$ there is a natural $(n+1)$-step algebraic filtration $\mathcal{F}$ on the subquotient complex $C_{\{\max(i,j-m)=0\}}$ of $\cfk^\infty(S^3,K)$:
\begin{eqnarray*}
0 \subseteq C_{\{ i < -n+1, j = m \}} 
\subseteq
\cdots 
\subseteq  C_{\{ i < 0, j = m \} } 
\subseteq C_{\{\max(i,j-m)=0\}}. 
\end{eqnarray*}
This filtration is illustrated in the case $n = 3$ in Figure~\ref{fig:filtration}.

Theorem \ref{hatfiltration} says that this algebraic filtration $\mathcal{F}$ corresponds to a relative $\mathbb{Z}$-filtration on $\cfhat(S^3_t(K), \s_m)$ induced by $\mu_n \in S^3_t(K)$. This generalizes work of Hedden \cite{hedden} who studied the $n=1$ case of the filtered complex $\cfkhat(S^3_t(K), \mu, \s_m)$.

\begin{theorem}
\label{hatfiltration}
Let $K \subset S^3$ be a knot, and fix $m, n \in \mathbb{Z}$. Then there exists $T = T(m, n) > 0$ such that for all $t > T$, the following holds: The filtered chain homotopy type of the $(n+1)$ step filtration $\mathcal{F}$ on $C\{\max(i,j-m)=0\}$ described above is  filtered chain homotopy equivalent to that of the filtration on $\widehat{CF}(S^3_{t}(K), \mathfrak{s}_m)$ induced by $\mu_n \subset  S^3_t(K) $. 

\end{theorem}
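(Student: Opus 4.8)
The plan is to leverage the Ozsv\'ath--Szab\'o chain isomorphism $\Phi_m \co \cfhat(S^3_t(K),\s_m) \to C\{\max(i,j-m)=0\}$ and to upgrade it to a filtered statement by analyzing the $\Z$--filtration that $\mu_n$ induces on the target. Since $\Phi_m$ is already a chain isomorphism for $t$ large, the content of the theorem lies entirely in matching the two filtrations. Concretely, I would first fix a doubly-pointed Heegaard diagram for the pair $(S^3_t(K),\mu_n)$ that refines the triple diagram $(\Sigma,\alpha,\beta,\gamma,w,z)$ of Figure~\ref{fig:originalwindingregion}: the knot $\mu_n$, being the $(n,1)$--cable of the meridian $\gamma_g$, can be realized inside the winding region, and encoding it amounts to replacing the single basepoint pair by a pair $(w,z')$ whose separating arc crosses $\alpha_g$ in a controlled way near the intersection points $x_s$. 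The $\mu_n$--filtration level of a generator of $\cfhat(S^3_t(K),\s_m)$ is then read off from the relative Alexander grading, i.e.\ from the quantity $n_{z'}(\phi)-n_w(\phi)$ along disks $\phi$ connecting generators.

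Second, I would compute this filtration level explicitly. Via $\Phi_m$, a generator is sent to $[y,-n_w(\psi),m-n_z(\psi)]$, so each generator is labeled by a triple $[x,i,j]$ with $\max(i,j-m)=0$. I would show that the $\mu_n$--filtration level of such a generator is governed by its $i$--coordinate, thresholded against the integers $0,-1,\dots,-n+1$: because the $(n,1)$--cable winds $n$ times around the meridian, a domain connecting two winding-region generators accrues multiplicity across $z'$ that grows one unit per unit decrease in $i$, saturating once $i\le -n$. This is the combinatorial origin of the $(n+1)$ steps, and it should match the associated graded pieces of $\mathcal{F}$: the successive quotients supported on the vertical segments $\{i=-k,\,j=m\}$ for $1\le k\le n-1$, the bottom subcomplex $\{i\le -n,\,j=m\}$, and the top quotient supported on the horizontal edge $\{i=0,\,j\le m\}$.

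Third, I would promote the bijection to a filtered chain homotopy equivalence. Having matched filtration levels on generators, it remains to check that $\Phi_m$ itself is filtered, i.e.\ that every holomorphic triangle contributing to $\Phi_m$ is non-increasing in the $\mu_n$--filtration; this follows from positivity of the relevant $n_{z'}$--multiplicities once $t$ is chosen large enough (depending on both $m$ and $n$) that only the nearest, small-area triangles supported in the winding region contribute, exactly as in the original large-surgery argument. A filtered map that induces an isomorphism on the associated graded is a filtered chain homotopy equivalence, which gives the theorem; the grading shift $[\epsilon]$ and the passage to the stated filtered complex are then formal.

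The main obstacle I anticipate is the bookkeeping in the winding region: one must verify that the second basepoint $z'$ for $\mu_n$ can be placed so that $n_{z'}(\phi)-n_w(\phi)$ for a connecting domain is precisely the $i$--coordinate capped into $[-n,0]$, and that this holds uniformly for all $t>T(m,n)$. In particular, the periodicity introduced by the $n$--fold winding of the cable must be shown to align exactly with the integer thresholds defining $\mathcal{F}$, and one must rule out extraneous contributions from domains that wrap around the winding region. Controlling these, and confirming that $T$ can be taken to depend only on $m$ and $n$, is where the real work lies.
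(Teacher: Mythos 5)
Your proposal is correct and follows essentially the same route as the paper: the paper also realizes $\mu_n$ by placing a second basepoint $z_n$, located $n$ regions from $w$ in the winding region of the very same triple diagram defining $\Phi_m$, and then computes the relative Alexander grading $n_{z_n}(\phi)-n_w(\phi)$ of winding-region generators via explicit Whitney disks (a generalization of Hedden's Lemma 4.2), finding exactly your thresholded-$i$ answer: constant for $i>0$, dropping one step per unit decrease of $i$, and saturating for $i\le -n$. The winding-region bookkeeping you flag as the real work is precisely the content of the paper's Lemma~\ref{lem:filtration}, and the conclusion then follows by matching these filtration levels with the algebraic filtration $\mathcal{F}$ under $\Phi_m$, just as you outline.
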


\begin{proof}
The key observation will be that the triple diagram $(\Sigma, \alpha, \beta, \gamma, w, z)$ used to define $\Phi_m$ not only specifies a Heegaard diagram for the knot $(S^3, K)$, but also a Heegaard diagram for the knot $(S^3_t (K), \mu_n)$ with the addition of a basepoint $z'$. 
Place an extra basepoint $z'=z_n$ so that it is $n$ regions away from the basepoint $w$ in the Heegaard triple diagram representing the cobordism between $S^3$ and $S^3_t(K)$ as in Figure~\ref{fig:windingregion}. (This can be accomplished if $t$ is sufficiently large, e.g. if $t > 2n$). The knot represented by the doubly-pointed Heegaard diagram $(\Sigma, \alpha, \beta, w, z_n)$ is $\mu_n $ in $S^3_t(K)$. 

An intersection point $x' \in \bbT_\alpha \cap \bbT_\beta$ is said to be supported in the winding region if the component of $x'$ in $\alpha_g$ lies in the local picture of Figure~\ref{fig:windingregion}. Intersection points in the winding region are in $t$ to 1 correspondence with intersection points $x$ in $\bbT_\alpha \cap \bbT_\gamma$. 

Fix a Spin$^c$ structure $\s_m$ where $m \in \mathbb{Z}$. For $t$ (the surgery coefficient) sufficiently large, any generator 
$x' \in \bbT_\alpha \cap \bbT_\beta$ representing $\text{Spin}^c$ structure $\s_m$ is supported in the winding region. In this case, there is a uniquely determined $x \in \bbT_\alpha \cap \bbT_\gamma$ and a canonical small triangle $\psi \in \pi_2(x, \theta, x')$.

Suppose $\psi \in \pi_2(x, \theta, x')$ is the canonical small triangle and 
$x'\in \bbT_\alpha \cap \bbT_\beta$ is a generator representing $\text{Spin}^c$ structure $\s_m$. If  $k=n_z(\psi) \geq 0$ (so $n_w(\psi) = 0$), then the $\alpha_g$ component of $x'$ is $x_{k}$ (and lies $k$ units to the left of $x_0$) in Figure \ref{fig:windingregion}. In this case, $\Phi_m$ maps $x'$ to $C\{i=0,j\leq m\}$. 
On the other hand, if $x'$ is a generator with $n_z(\psi) = 0$ and $l = n_w(\psi) > 0$, then the $\alpha_g$ component of $x'$ is $x_{-l}$ (and lies $l$ steps to the right of $x_0$) in Figure \ref{fig:windingregion}. In this case, $\Phi_m$ maps $x'$ to the subcomplex $C\{i\leq -l,j = m\} \subset C\{i<0,j = m\}$.

\begin{figure}[htb!]
\begin{tikzpicture}[scale=.65]
		\draw [-, thick] (-9, 0) -- (11, 0);
		\draw [-, thick] (-9,2 ) -- (11, 2);
		\draw [-, red, thick] (-9, 1) -- (11, 1);
		
		\draw [-, blue, thick] (-8,0 ) -- (-6, 2);
		\draw [-, blue, thick] (-6,0 ) -- (-4, 2);
		\draw [-, blue, thick] (-4,0 ) -- (-2, 2);
		\draw [-, blue, thick] (0, 1.5)  parabola (-2,0 );
		\draw [-, blue, thick] (0,1.5 )  parabola  (2, 2);
		\draw [-, blue, thick] (2,0 ) -- (4, 2);
		\draw [-, blue, thick] (4,0 ) -- (6, 2);
		\draw [-, blue, thick] (6,0 ) -- (8, 2);
		\draw [-, blue, thick] (8,0 ) -- (10, 2);
		\draw [-, blue, thick] (11, .5)  parabola (10,0);
		\draw [-, blue, thick] (-9, 1.5)  parabola (-8,2);
		
		\draw [-, dotted, blue] (10,2) -- (10, 0);
		\draw [-, dotted, blue] (8,2) -- (8, 0);
		\draw [-, dotted, blue] (2,2) -- (2, 0);
		\draw [-, dotted, blue] (4,2) -- (4, 0);
		\draw [-, dotted, blue] (6,2) -- (6, 0);
		\draw [-, dotted, blue] (-2,2) -- (-2, 0);
		\draw [-, dotted, blue] (-4,2) -- (-4, 0);
		\draw [-, dotted, blue] (-6,2) -- (-6, 0);
		\draw [-, dotted, blue] (-8,2) -- (-8, 0);
		
		\draw [-, thick, orange] (-0.5, 0) -- (-0.5, 2);
		
		\filldraw (-1.15, 1) circle (1.5pt) node[below  ] {\tiny$x_{1}$};
		 \filldraw (-3, 1) circle (1.5pt) node[below  ] {\tiny$x_{2}$};
		 \filldraw (-5, 1) circle (1.5pt) node[below  ] {\tiny$x_{3}$};
		 \filldraw (-7, 1) circle (1.5pt) node[below  ] {\tiny$x_{4}$};
		 
		 \filldraw (3, 1) circle (1.5pt) node[below right] {\tiny$x_{-1}$};
		 \filldraw (5, 1) circle (1.5pt) node[below right] {\tiny$x_{-2}$};
		 \filldraw (7, 1) circle (1.5pt) node[below right] {\tiny$x_{-3}$};
		 \filldraw (9, 1) circle (1.5pt) node[below right] {\tiny$x_{-4}$};
		 
		 \filldraw (-.5, 1) circle (1.5pt) node[below right] {\tiny$x_0$};
		 \filldraw (-0.5, 1.4) circle (1.5pt) node[ right] {\tiny$\theta$};
		 
		  \node [below left] at (-8.2, 1.1) {\tiny$\alpha_g$};
		  \node [below right] at (9.8, 2) {\tiny$\beta_g$};
		  \node [ right] at (-.6, .2) {\tiny$\gamma_g$};
		  
		  \filldraw (-.2, 1.8) circle (.5pt) node[ right] {\tiny$w$};
		  \filldraw (-1.2, 1.8) circle (.5pt) node[ right] {\tiny$z$};
		  \filldraw (6.5, 1.5) circle (.5pt) node[ right] {\tiny$z_3$};
\end{tikzpicture}

\caption[]{Local picture of the winding region of the Heegaard triple diagram $(\Sigma, \alpha, \beta, \gamma, w, z_n)$ for the cobordism between $S^3_tK$ and $S^3$. The basepoint $z_n$ is located $n$ regions away from the basepoint $w$ in the Heegaard  diagram $(\Sigma, \alpha, \beta, w, z_n)$. Here we depict the basepoint $z_n$ for $n=3$.
}
\label{fig:windingregion} 
\end{figure}
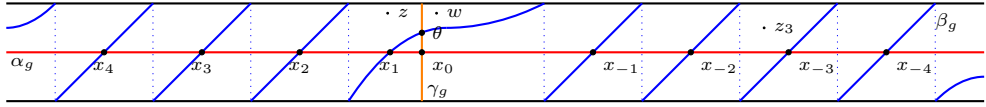
The following lemma (which generalizes Lemma 4.2 of \cite{hedden}) will be used to finish the proof.

\begin{lemma}
\label{lem:filtration}
Let $p \in \cfkhat(S^3_t(K), \mu_n, \s_m)$ be a generator supported in the winding region, and let $x_i$ denote the $\alpha_g$ component of the corresponding intersection point in $\bbT_\alpha \cap \bbT_\beta$, where the $x_i$ are labeled as in Figure~\ref{fig:windingregion}. Then
\[
\mathcal{F}(p) = 
\begin{cases}
\mathcal{F}_{top} & i > 0;
\\
\mathcal{F}_{top + i} & -n < i < 0 ;
\\
\mathcal{F}_{bottom} & i \leq -n.
\end{cases}
\]
Here, $\mathcal{F}_{top}$ (respectively, $\mathcal{F}_{bottom}$) denotes the top (respectively, bottom) filtration level of $\cfkhat(S^3_t(K), \mu_n, \s_m)$. $\mathcal{F}_{top - i}$ denotes the filtration level that is $i$ lower than $\mathcal{F}_{top}$. In addition $\mathcal{F}_{bottom} = \mathcal{F}_{top-n}$, so this is an $(n+1)$-step filtration.
\begin{proof}
The $\mathbb{Z}$-filtration $\mathcal{F}$ is defined by the relative \emph{Alexander} grading $A'_n$ induced by $\mu_n$ on $\cf^\infty(S^3_tK, \s_m)$. That is, 
\[\mathcal{F}(p) - \mathcal{F}(q) = n_{z_n}(\phi) - n_w(\phi)\] 
where $\phi \in \pi_2(p, q)$ is a Whitney disk connecting $p, q \in \bbT_\alpha \cap \bbT_\beta$.

Let $p, q \in \cfkhat(S^3_tK, \mu_n, \s_m)$ be generators supported in the
winding region, and let $x_i$, $x_j$ denote the $\alpha_g$ components of the corresponding intersection points $\bbT_\alpha \cap \bbT_\beta$. Assume without loss of generality that $i < j$ (so that $x_i$ lies to the right of $x_j$). 

We will define a set of $n$ arcs $\delta_1, \dots, \delta_n$ on $\beta$ as follows.
Let $\delta_1$ denote the arc on $\beta$ connecting $x_{1} $ to $x_{-1}$. 
Let $\delta_k$ denote on the arc on $\beta$ connecting $x_{-(k-1)}$ to $x_{-k}$, for $k \in \{2, \dots, n\}$. 

We will construct a Whitney disk $\phi_{p,q} \in \pi_2(p,q)$ with the following properties:
\begin{itemize}
\item If $i >0$ and $j > 0$, (that is, $x_i$, $x_j$ both lie on the left of $x_0$), then 
$\partial \phi_{p,q} $ doesn't contain any arc $\delta_k$. Therefore, 
\[\mathcal{F}(p) - \mathcal{F}(q) = 0. \]


\item If $i \leq -n$ and $j \leq -n$, (that is, $x_i$, $x_j$ both lie $\geq n$ steps to the right of $x_0$), then
$\partial \phi_{p,q} $ doesn't contain any arc $\delta_k$. Therefore, 
\[\mathcal{F}(p) - \mathcal{F}(q) = 0. \]
\item 
If $i < -n$ and $j > 0$, (that is, $x_j$ lies to the left of $x_0$ and $x_i$ lies $i$ steps to the right of $x_0$), then $\partial \phi_{p,q} $ contains the $n$ arcs $\delta_1, \dots, \delta_n$, each with multiplicity one. Therefore, 
\[\mathcal{F}(p) - \mathcal{F}(q) =  -n. \]

\item 
If $-n \leq i < 0$ and $j > 0$, (that is, $x_j$ lies to the left of $x_0$ and $x_i$ lies $i$ steps to the right of $x_0$), then $\partial \phi_{p,q} $ contains the $i$ arcs $\delta_1, \dots, \delta_i$, each with multiplicity one. Moreover, $\partial \phi_{p,q} $ doesn't contain the arcs $\delta_{k}$ for $k > i$. Therefore, 
\[\mathcal{F}(p) - \mathcal{F}(q) =  -i. \]


\item If $-n < j < 0$ and $i \leq -n$, (that is, $x_i$ lies $\geq n$ steps to the right of $x_0$ and $x_j$ lies $j$ steps to the right of $x_0$), then $\partial \phi_{p,q} $ contains the $n+j$ arcs $\delta_{|j|+1}, \dots, \delta_n$, each with multiplicity one. Moreover, $\partial \phi_{p,q} $ doesn't contain the arcs $\delta_{k}$ for $k \leq |j|$. Therefore, 
\[\mathcal{F}(p) - \mathcal{F}(q) = -n-j. \]

\item If $-n < i < 0$ and $-n < j < 0$, (that is, $x_j$ lies $ j$ steps to the right of $x_0$ and $x_i$ lies $i$ steps to the right of $x_0$), then $\partial \phi_{p,q} $ contains the $j-i$ arcs $\delta_{|j|+1}, \dots, \delta_{|i|}$, each with multiplicity one. Therefore,
\[\mathcal{F}(p) - \mathcal{F}(q) = i-j. \]

\end{itemize} 

Assuming the existence of such $\phi_{p,q}$, the lemma follows immediately. 

In \cite[Lemma 4.2]{hedden} Hedden constructs a Whitney disk $\phi_{p,q} \in \pi_2(p,q)$. The above enumerated properties of $\partial \phi_{p,q}$ will be immediate from the construction. We restate his construction here.
Note first since $p, q$ lie in the winding region, they correspond uniquely 
to intersection points $\tilde{p}$, $\tilde{q} \in \bbT_\alpha \cap \bbT_\gamma$. These intersection 
points $\tilde{p}$, $\tilde{q}$ can be connected by a Whitney disk
$\phi \in \pi_2(\tilde{p}, \tilde{q})$ with 
$n_w(\phi) = 0$ and $n_z(\phi) = k$ for some $k \in \Z_{\geq 0}$. 
This means that $\partial \phi$
contains $\gamma_g$ with multiplicity $k$, which further implies that the distance 
between $x_i$ and $x_j$ is $k$, that is, $i-j = k$. 
The domain of $\phi_{p,q}$ can then be obtained from the domain of $\phi$ by a simple modification in the winding region as described in \cite{hedden}. This modification is shown in Figure~\ref{fig:domainsinwindingregion}. It replaces the boundary component $k \cdot \gamma_g$ by a simple closed curve from an arc connecting $x_i$ and $x_j$ 
along $\alpha_g$ followed by an arc connecting
$x_j$ to $x_i$ along $\beta_g$, and which wraps $k$ times around the neck of the winding region.
\end{proof}
\end{lemma}

This completes the description of the knot Floer complex $\cfkhat(S^3_t(K), \mu_n)$ in terms of the complex $\cfk^\infty(S^3, K)$. 
\end{proof}

\begin{figure}[htb!]
\begin{subfigure}[]{\textwidth}
\caption{The domain of a disk $\phi \in \pi_2(\tilde{p}, \tilde{q})$.}
\centering
\begin{tikzpicture}[scale=.65]
		\fill[gray!15]    (-9,2) -- (-9,0) -- (-.5,0) -- (-.5,2);
		
		\draw [-, thick] (-9, 0) -- (11, 0);
		\draw [-, thick] (-9,2 ) -- (11, 2);
		\draw [-, red, thick] (-9, 1) -- (11, 1);
		
		\draw [-, blue, thick] (-8,0 ) -- (-6, 2);
		\draw [-, blue, thick] (-6,0 ) -- (-4, 2);
		\draw [-, blue, thick] (-4,0 ) -- (-2, 2);
		\draw [-, blue, thick] (0, 1.5)  parabola (-2,0 );
		\draw [-, blue, thick] (0,1.5 )  parabola  (2, 2);
		\draw [-, blue, thick] (2,0 ) -- (4, 2);
		\draw [-, blue, thick] (4,0 ) -- (6, 2);
		\draw [-, blue, thick] (6,0 ) -- (8, 2);
		\draw [-, blue, thick] (8,0 ) -- (10, 2);
		\draw [-, blue, thick] (11, .5)  parabola (10,0);
		\draw [-, blue, thick] (-9, 1.5)  parabola (-8,2);
		
		\draw [-, dotted, blue] (10,2) -- (10, 0);
		\draw [-, dotted, blue] (8,2) -- (8, 0);
		\draw [-, dotted, blue] (2,2) -- (2, 0);
		\draw [-, dotted, blue] (4,2) -- (4, 0);
		\draw [-, dotted, blue] (6,2) -- (6, 0);
		\draw [-, dotted, blue] (-2,2) -- (-2, 0);
		\draw [-, dotted, blue] (-4,2) -- (-4, 0);
		\draw [-, dotted, blue] (-6,2) -- (-6, 0);
		\draw [-, dotted, blue] (-8,2) -- (-8, 0);
		
		\draw [-, thick, orange] (-0.5, 0) -- (-0.5, 2);
		
		\filldraw (-1.15, 1) circle (1.5pt) node[below  ] {\tiny$x_{1}$};
		 \filldraw (-3, 1) circle (1.5pt) node[below  ] {\tiny$x_{2}$};
		 \filldraw (-5, 1) circle (1.5pt) node[below  ] {\tiny$x_{3}$};
		 \filldraw (-7, 1) circle (1.5pt) node[below  ] {\tiny$x_{4}$};
		 
		 \filldraw (3, 1) circle (1.5pt) node[below right] {\tiny$x_{-1}$};
		 \filldraw (5, 1) circle (1.5pt) node[below right] {\tiny$x_{-2}$};
		 \filldraw (7, 1) circle (1.5pt) node[below right] {\tiny$x_{-3}$};
		 \filldraw (9, 1) circle (1.5pt) node[below right] {\tiny$x_{-4}$};
		 
		 \filldraw (-.5, 1) circle (1.5pt) node[below right] {\tiny$x_0$};
		 \filldraw (-0.5, 1.4) circle (1.5pt) node[ right] {\tiny$\theta$};
		 
		  \node [below left] at (-8.2, 1.1) {\tiny$\alpha_g$};
		  \node [below right] at (9.8, 2) {\tiny$\beta_g$};
		  \node [ right] at (-.6, .2) {\tiny$\gamma_g$};
		  
		  \filldraw (-.2, 1.8) circle (.5pt) node[ right] {\tiny$w$};
		  \filldraw (-1.2, 1.8) circle (.5pt) node[ right] {\tiny$z$};
		  \filldraw (6.5, 1.5) circle (.5pt) node[ right] {\tiny$z_3$};

\end{tikzpicture}
\label{subfig:a}
\end{subfigure}
\\
\vspace{4mm}
\begin{subfigure}[]{\textwidth}
\caption{$\phi_{p,q} \in \pi_2(p,q)$ where $p$, $q$ have $\alpha_g$ components  $x_{-3}$, $x_{-1}$. $\partial \phi_{p,q}$ contains arcs $\delta_2$ and $\delta_3$ on $\beta$ drawn in violet.}
\centering
\begin{tikzpicture}[scale=.65]
		\fill[gray!30]    (-9,2) -- (-9,0) -- (2,0) -- (4,2);
		\fill[gray!15]    (4,2) -- (3,1) -- (5,1) -- (6,2);
		\fill[gray!30]    (3,1) -- (2,0) -- (4,0) -- (5,1);
		\fill[gray!15]    (5,1) -- (4,0) -- (6,0) -- (7,1);
		
		\draw [-, thick] (-9, 0) -- (11, 0);
		\draw [-, thick] (-9,2 ) -- (11, 2);
		\draw [-, red, thick] (-9, 1) -- (11, 1);
		
		\draw [-, blue, thick] (-8,0 ) -- (-6, 2);
		\draw [-, blue, thick] (-6,0 ) -- (-4, 2);
		\draw [-, blue, thick] (-4,0 ) -- (-2, 2);
		\draw [-, blue, thick] (0, 1.5)  parabola (-2,0 );
		\draw [-, blue, thick] (0,1.5 )  parabola  (2, 2);
		\draw [-, blue, thick] (2,0 ) -- (3, 1);
		\draw [-, violet, very thick] (3,1 ) -- (4, 2);
		\draw [-, violet, very thick] (4,0 ) -- (6, 2);
		\draw [-, violet, very thick] (6,0 ) -- (7, 1);
		\draw [-, blue, thick] (7,1 ) -- (8, 2);
		\draw [-, blue, thick] (8,0 ) -- (10, 2);
		\draw [-, blue, thick] (11, .5)  parabola (10,0);
		\draw [-, blue, thick] (-9, 1.5)  parabola (-8,2);
		
		\draw [-, dotted, blue] (10,2) -- (10, 0);
		\draw [-, dotted, blue] (8,2) -- (8, 0);
		\draw [-, dotted, blue] (2,2) -- (2, 0);
		\draw [-, dotted, blue] (4,2) -- (4, 0);
		\draw [-, dotted, blue] (6,2) -- (6, 0);
		\draw [-, dotted, blue] (-2,2) -- (-2, 0);
		\draw [-, dotted, blue] (-4,2) -- (-4, 0);
		\draw [-, dotted, blue] (-6,2) -- (-6, 0);
		\draw [-, dotted, blue] (-8,2) -- (-8, 0);
		
		\draw [-, thick, orange] (-0.5, 0) -- (-0.5, 2);
		
		\filldraw (-1.15, 1) circle (1.5pt) node[below  ] {\tiny$x_{1}$};
		 \filldraw (-3, 1) circle (1.5pt) node[below  ] {\tiny$x_{2}$};
		 \filldraw (-5, 1) circle (1.5pt) node[below  ] {\tiny$x_{3}$};
		 \filldraw (-7, 1) circle (1.5pt) node[below  ] {\tiny$x_{4}$};
		 
		 \filldraw (3, 1) circle (1.5pt) node[below right] {\tiny$x_{-1}$};
		 \filldraw (5, 1) circle (1.5pt) node[below right] {\tiny$x_{-2}$};
		 \filldraw (7, 1) circle (1.5pt) node[below right] {\tiny$x_{-3}$};
		 \filldraw (9, 1) circle (1.5pt) node[below right] {\tiny$x_{-4}$};
		 
		 \filldraw (-.5, 1) circle (1.5pt) node[below right] {\tiny$x_0$};
		 \filldraw (-0.5, 1.4) circle (1.5pt) node[ right] {\tiny$\theta$};
		 
		  \node [below left] at (-8.2, 1.1) {\tiny$\alpha_g$};
		  \node [below right] at (9.8, 2) {\tiny$\beta_g$};
		  \node [ right] at (-.6, .2) {\tiny$\gamma_g$};
		  
		  \filldraw (-.2, 1.8) circle (.5pt) node[ right] {\tiny$w$};
		  \filldraw (-1.2, 1.8) circle (.5pt) node[ right] {\tiny$z$};
		  \filldraw (6.5, 1.5) circle (.5pt) node[ right] {\tiny$z_3$};

\end{tikzpicture}
\label{subfig:b}
\end{subfigure}
\\
\vspace{4mm}
\begin{subfigure}[]{\textwidth}
\subcaption{$\phi_{p,q} \in \pi_2(p,q)$ where $p$, $q$ have $\alpha_g$ components  $x_{-2}$, $x_{-1}$.}
\centering
\begin{tikzpicture}[scale=.65]
		\fill[gray!15]    (-9,2) -- (-9,0) -- (2,0) -- (4,2);
		\fill[gray!15]    (3,1) -- (2,0) -- (4,0) -- (5,1);
		
		\draw [-, thick] (-9, 0) -- (11, 0);
		\draw [-, thick] (-9,2 ) -- (11, 2);
		\draw [-, red, thick] (-9, 1) -- (11, 1);
		
		\draw [-, blue, thick] (-8,0 ) -- (-6, 2);
		\draw [-, blue, thick] (-6,0 ) -- (-4, 2);
		\draw [-, blue, thick] (-4,0 ) -- (-2, 2);
		\draw [-, blue, thick] (0, 1.5)  parabola (-2,0 );
		\draw [-, blue, thick] (0,1.5 )  parabola  (2, 2);
		\draw [-, blue, thick] (2,0 ) -- (4, 2);
		\draw [-, blue, thick] (4,0 ) -- (6, 2);
		\draw [-, blue, thick] (6,0 ) -- (8, 2);
		\draw [-, blue, thick] (8,0 ) -- (10, 2);
		\draw [-, blue, thick] (11, .5)  parabola (10,0);
		\draw [-, blue, thick] (-9, 1.5)  parabola (-8,2);
		
		\draw [-, dotted, blue] (10,2) -- (10, 0);
		\draw [-, dotted, blue] (8,2) -- (8, 0);
		\draw [-, dotted, blue] (2,2) -- (2, 0);
		\draw [-, dotted, blue] (4,2) -- (4, 0);
		\draw [-, dotted, blue] (6,2) -- (6, 0);
		\draw [-, dotted, blue] (-2,2) -- (-2, 0);
		\draw [-, dotted, blue] (-4,2) -- (-4, 0);
		\draw [-, dotted, blue] (-6,2) -- (-6, 0);
		\draw [-, dotted, blue] (-8,2) -- (-8, 0);
		
		\draw [-, thick, orange] (-0.5, 0) -- (-0.5, 2);
		
		\filldraw (-1.15, 1) circle (1.5pt) node[below  ] {\tiny$x_{1}$};
		 \filldraw (-3, 1) circle (1.5pt) node[below  ] {\tiny$x_{2}$};
		 \filldraw (-5, 1) circle (1.5pt) node[below  ] {\tiny$x_{3}$};
		 \filldraw (-7, 1) circle (1.5pt) node[below  ] {\tiny$x_{4}$};
		 
		 \filldraw (3, 1) circle (1.5pt) node[below right] {\tiny$x_{-1}$};
		 \filldraw (5, 1) circle (1.5pt) node[below right] {\tiny$x_{-2}$};
		 \filldraw (7, 1) circle (1.5pt) node[below right] {\tiny$x_{-3}$};
		 \filldraw (9, 1) circle (1.5pt) node[below right] {\tiny$x_{-4}$};
		 
		 \filldraw (-.5, 1) circle (1.5pt) node[below right] {\tiny$x_0$};
		 \filldraw (-0.5, 1.4) circle (1.5pt) node[ right] {\tiny$\theta$};
		 
		  \node [below left] at (-8.2, 1.1) {\tiny$\alpha_g$};
		  \node [below right] at (9.8, 2) {\tiny$\beta_g$};
		  \node [ right] at (-.6, .2) {\tiny$\gamma_g$};
		  
		  \filldraw (-.2, 1.8) circle (.5pt) node[ right] {\tiny$w$};
		  \filldraw (-1.2, 1.8) circle (.5pt) node[ right] {\tiny$z$};
		  \filldraw (6.5, 1.5) circle (.5pt) node[ right] {\tiny$z_3$};
\end{tikzpicture}
\label{subfig:c}
\end{subfigure}
\\
\vspace{4mm}
\begin{subfigure}[]{\textwidth}
\caption{$\phi_{p,q} \in \pi_2(p,q)$ where $p$, $q$ have $\alpha_g$ components  $x_{1}$, $x_{2}$.}
\centering
\begin{tikzpicture}[scale=.65]
		\fill[gray!15]    (-9,2) -- (-9,0) -- (-4,0) -- (-2,2);
		\fill[gray!15]    (-3,1) -- (-4,0) -- (-2,0) -- (-1.15,1);

		\draw [-, thick] (-9, 0) -- (11, 0);
		\draw [-, thick] (-9,2 ) -- (11, 2);
		\draw [-, red, thick] (-9, 1) -- (11, 1);
		
		\draw [-, blue, thick] (-8,0 ) -- (-6, 2);
		\draw [-, blue, thick] (-6,0 ) -- (-4, 2);
		\draw [-, blue, thick] (-4,0 ) -- (-2, 2);
		\draw [-, blue, thick] (0, 1.5)  parabola (-2,0 );
		\draw [-, blue, thick] (0,1.5 )  parabola  (2, 2);
		\draw [-, blue, thick] (2,0 ) -- (4, 2);
		\draw [-, blue, thick] (4,0 ) -- (6, 2);
		\draw [-, blue, thick] (6,0 ) -- (8, 2);
		\draw [-, blue, thick] (8,0 ) -- (10, 2);
		\draw [-, blue, thick] (11, .5)  parabola (10,0);
		\draw [-, blue, thick] (-9, 1.5)  parabola (-8,2);
		
		\draw [-, dotted, blue] (10,2) -- (10, 0);
		\draw [-, dotted, blue] (8,2) -- (8, 0);
		\draw [-, dotted, blue] (2,2) -- (2, 0);
		\draw [-, dotted, blue] (4,2) -- (4, 0);
		\draw [-, dotted, blue] (6,2) -- (6, 0);
		\draw [-, dotted, blue] (-2,2) -- (-2, 0);
		\draw [-, dotted, blue] (-4,2) -- (-4, 0);
		\draw [-, dotted, blue] (-6,2) -- (-6, 0);
		\draw [-, dotted, blue] (-8,2) -- (-8, 0);
		
		\draw [-, thick, orange] (-0.5, 0) -- (-0.5, 2);
		
		\filldraw (-1.15, 1) circle (1.5pt) node[below  ] {\tiny$x_{1}$};
		 \filldraw (-3, 1) circle (1.5pt) node[below  ] {\tiny$x_{2}$};
		 \filldraw (-5, 1) circle (1.5pt) node[below  ] {\tiny$x_{3}$};
		 \filldraw (-7, 1) circle (1.5pt) node[below  ] {\tiny$x_{4}$};
		 
		 \filldraw (3, 1) circle (1.5pt) node[below right] {\tiny$x_{-1}$};
		 \filldraw (5, 1) circle (1.5pt) node[below right] {\tiny$x_{-2}$};
		 \filldraw (7, 1) circle (1.5pt) node[below right] {\tiny$x_{-3}$};
		 \filldraw (9, 1) circle (1.5pt) node[below right] {\tiny$x_{-4}$};
		 
		 \filldraw (-.5, 1) circle (1.5pt) node[below right] {\tiny$x_0$};
		 \filldraw (-0.5, 1.4) circle (1.5pt) node[ right] {\tiny$\theta$};
		 
		  \node [below left] at (-8.2, 1.1) {\tiny$\alpha_g$};
		  \node [below right] at (9.8, 2) {\tiny$\beta_g$};
		  \node [ right] at (-.6, .2) {\tiny$\gamma_g$};
		  
		  \filldraw (-.2, 1.8) circle (.5pt) node[ right] {\tiny$w$};
		  \filldraw (-1.2, 1.8) circle (.5pt) node[ right] {\tiny$z$};
		  \filldraw (6.5, 1.5) circle (.5pt) node[ right] {\tiny$z_3$};
\end{tikzpicture}
\label{subfig:d}
\end{subfigure}
\\
\vspace{4mm}
\begin{subfigure}[]{\textwidth}
\caption{$\phi_{p,q} \in \pi_2(p,q)$ where $p$, $q$ have $\alpha_g$ components  $x_{-2}$, $x_{1}$.}
\centering
\begin{tikzpicture}[scale=.65]
		\fill[gray!15]    (-9,2) -- (-9,0) -- (2,0) -- (4,2);
		\fill[gray!15]    (3,1) -- (2,0) -- (4,0) -- (5,1);
		\fill[gray!30]    (-9,2) -- (-9,0) -- (-2,0) -- (-.2,2);
		\fill[gray!30]    (-2,1) -- (-2,0) -- (2,0) -- (3,1);
		\fill[gray!30]    (-.5,2) -- (-.5,1.4) -- (1.4,1.7) -- (2,2);
		\fill[gray!30]    (-.5,2) -- (-.5,1.4) -- (-1.15,1) ;
		
		\draw [-, thick] (-9, 0) -- (11, 0);
		\draw [-, thick] (-9,2 ) -- (11, 2);
		\draw [-, red, thick] (-9, 1) -- (11, 1);
		
		\draw [-, blue, thick] (-8,0 ) -- (-6, 2);
		\draw [-, blue, thick] (-6,0 ) -- (-4, 2);
		\draw [-, blue, thick] (-4,0 ) -- (-2, 2);
		\draw [-, blue, thick] (0, 1.5)  parabola (-2,0 );
		\draw [-, blue, thick] (0,1.5 )  parabola  (2, 2);
		\draw [-, blue, thick] (2,0 ) -- (4, 2);
		\draw [-, blue, thick] (4,0 ) -- (6, 2);
		\draw [-, blue, thick] (6,0 ) -- (8, 2);
		\draw [-, blue, thick] (8,0 ) -- (10, 2);
		\draw [-, blue, thick] (11, .5)  parabola (10,0);
		\draw [-, blue, thick] (-9, 1.5)  parabola (-8,2);
		
		\draw [-, dotted, blue] (10,2) -- (10, 0);
		\draw [-, dotted, blue] (8,2) -- (8, 0);
		\draw [-, dotted, blue] (2,2) -- (2, 0);
		\draw [-, dotted, blue] (4,2) -- (4, 0);
		\draw [-, dotted, blue] (6,2) -- (6, 0);
		\draw [-, dotted, blue] (-2,2) -- (-2, 0);
		\draw [-, dotted, blue] (-4,2) -- (-4, 0);
		\draw [-, dotted, blue] (-6,2) -- (-6, 0);
		\draw [-, dotted, blue] (-8,2) -- (-8, 0);
		
		\draw [-, thick, orange] (-0.5, 0) -- (-0.5, 2);
		
		\filldraw (-1.15, 1) circle (1.5pt) node[below  ] {\tiny$x_{1}$};
		 \filldraw (-3, 1) circle (1.5pt) node[below  ] {\tiny$x_{2}$};
		 \filldraw (-5, 1) circle (1.5pt) node[below  ] {\tiny$x_{3}$};
		 \filldraw (-7, 1) circle (1.5pt) node[below  ] {\tiny$x_{4}$};
		 
		 \filldraw (3, 1) circle (1.5pt) node[below right] {\tiny$x_{-1}$};
		 \filldraw (5, 1) circle (1.5pt) node[below right] {\tiny$x_{-2}$};
		 \filldraw (7, 1) circle (1.5pt) node[below right] {\tiny$x_{-3}$};
		 \filldraw (9, 1) circle (1.5pt) node[below right] {\tiny$x_{-4}$};
		 
		 \filldraw (-.5, 1) circle (1.5pt) node[below right] {\tiny$x_0$};
		 \filldraw (-0.5, 1.4) circle (1.5pt) node[ right] {\tiny$\theta$};
		 
		  \node [below left] at (-8.2, 1.1) {\tiny$\alpha_g$};
		  \node [below right] at (9.8, 2) {\tiny$\beta_g$};
		  \node [ right] at (-.6, .2) {\tiny$\gamma_g$};
		  
		  \filldraw (-.2, 1.8) circle (.5pt) node[ right] {\tiny$w$};
		  \filldraw (-1.2, 1.8) circle (.5pt) node[ right] {\tiny$z$};
		  \filldraw (6.5, 1.5) circle (.5pt) node[ right] {\tiny$z_3$};
\end{tikzpicture}
\label{subfig:e}
\end{subfigure}
\caption{The domain of a disk $\phi_{p,q} \in \pi_2(p,q)$, for $p, q \in \bbT_\alpha \cap \bbT_\beta$ in the winding region can be identified with the domain of a disk $\phi \in \pi_2(\tilde{p}, \tilde{q}).$ }
\label{fig:domainsinwindingregion}
\end{figure}
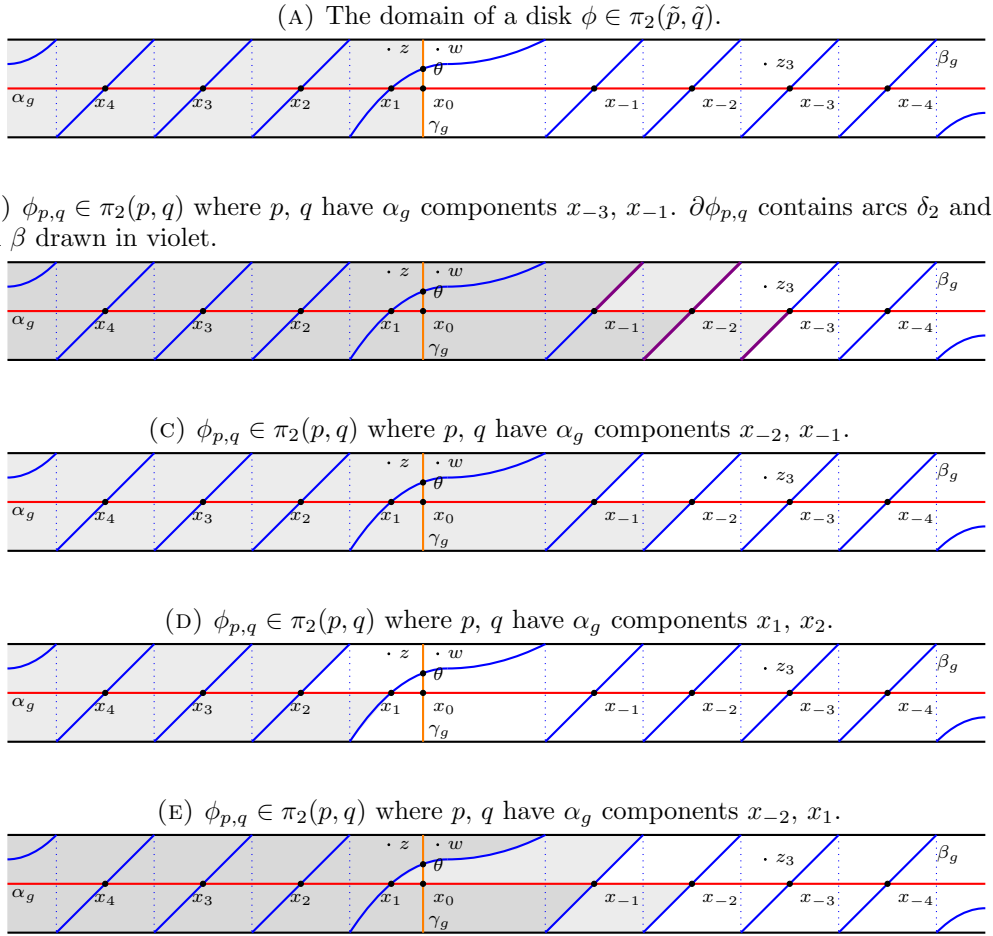

Theorem~\ref{hatfiltration} described the $\Z$-filtered chain homotopy type of knot Floer chain complex $\cfkhat(S^3_t(K), \mu_n, \s_m)$ for $t$ large with respect to $m$ and \textcolor{blue}{$n$}.  In Theorem~\ref{theorem:infinityfiltration}, we  describe the $\Z\oplus \Z$-filtered chain homotopy type of $\cfk^\infty(S^3_tK, \mu_n, \s_m)$. This generalizes Theorem 4.2 of Hedden-Kim-Livingston \cite{hedden-kim-livingston} which studies the $n=1$ case.

\begin{proof}[Proof of Theorem \ref{theorem:infinityfiltration}]
The isomorphism of chain complexes induced by the map (defined in \cite{HFK}) \[\Phi_m: \cf^\infty(S^3_t(K), \s_m) \to \cfk^\infty(S^3,K)\]
respects the $\F[U, U^{-1}]$-module structure of both complexes, and hence determines one of the $\Z$-filtrations (called the $U$-filtration) of $\cfk^\infty(S^3_{t}(K), \mu_n, \mathfrak{s}_m)$.

The knot $\mu_n \subset S^3_t(K)$ induces an additional $\Z$-filtration (the Alexander filtration) on $\widehat{\cf}(S^3_{t}(K), \mathfrak{s}_m)$ and on $\cfk^\infty(Y_t(K), \s_m)$. The additional $\Z$-filtration on $\cfk^\infty(Y_t(K), \mu_n, \s_m)$ can be determined in exactly the same way as it was determined for the case of $\widehat{\cf}(S^3_{t}(K), \mathfrak{s}_m)$. Lemma~\ref{lem:filtration} identifies the $\Z$-filtration induced on any given $i = constant$ slice in $\cf^\infty(S^3_t, \s_m)$ with a $(n+1)$-step filtration as above. This yields the statement of the theorem. 

Alternatively, the additional (Alexander) $\Z$-filtration on $\cfk^\infty(Y_t(K), \mu_n, \s_m)$ can be obtained from the Alexander filtration on $\cfkhat(Y_t(K), \mu_n, \s_m)$ by the fact that the $U$ variable decreases Alexander grading by one, i.e. we have the relation $A(U \cdot x) = A(x) - 1$. 
\end{proof}

\begin{figure}
\begin{tikzpicture}
\fill[gray!10]    (2,0) -- (8,6) -- (8,8) node[anchor= north east, black] {\tiny thick diagonal} -- (6,8) -- (0,2) -- (0,0);

\fill[gray!30]  (4.2, 2.2) rectangle (0, 1.8) node[anchor= south east, black] {\tiny$C\{\max(i,j-4)=0\} \to$};
\fill[gray!30]  (3.8, 0) rectangle (4.2, 2.2) ;

\draw[step=.5cm,gray, very thin] (0,0) grid (8,8);
\draw[thick,->] (0,4) -- (8,4) node[anchor=north west] {i};
\draw[thick,->] (4,0) -- (4,8) node[anchor=south east] {j};

\draw[red,-] (0,7.2) -- (4.2,7.2) -- (4.2,0) ;
\draw[red,-] (0,6.8) node[anchor=south east] {A = 0} -- (3.8,6.8) -- (3.8,0);

\draw[blue,-] (0,6.7) -- (3.7,6.7) -- (3.7,0) ;
\draw[blue,-] (0,6.3) node[anchor=south east] {A = -1} -- (3.3,6.3) -- (3.3,0);

\draw[violet,-] (0,6.2) -- (3.2,6.2) -- (3.2,0) ;
\draw[violet,-] (0,5.8) node[anchor=south east] {A = -2} -- (2.8,5.8) -- (2.8,0);

\draw[teal,-] (0,5.7) -- (2.7,5.7) -- (2.7,0) ;
\draw[teal,-] (0,5.3) node[anchor=south east] {A = -3} -- (2.3,5.3) -- (2.3,0);

\draw[purple,-] (0,5.2) -- (2.2,5.2) -- (2.2,0) ;
\draw[purple,-] (0,4.8) node[anchor=south east] {A = -4} -- (1.8,4.8) -- (1.8,0);

\draw[brown,-] (0,4.7) -- (1.7,4.7) -- (1.7,0) ;
\draw[brown,-] (0,4.3) node[anchor=south east] {A = -5} -- (1.3,4.3) -- (1.3,0);

\draw[cyan,-] (0,4.2) -- (1.2,4.2) -- (1.2,0) ;
\draw[cyan,-] (0,3.8) node[anchor=south east] {A = -6} -- (.8,3.8) -- (.8,0);

\draw[orange,-] (0,3.7) -- (.7,3.7) -- (.7,0) ;
\draw[orange,-] (0,3.3) node[anchor=south east] {A = -7} -- (.3,3.3) -- (.3,0);
\end{tikzpicture}
\caption{$\cfk^\infty(S^3,K)$ is supported along a thick diagonal of width $2g(K)+1$. The regions labeled $A =0, \dots, A = -7$  have constant Alexander grading $A_n'$ induced by $\mu_n$ on $\cf^\infty(S^3_t(K), \s_m)$. For spin$^c$ structures $\s_m$ where $|m| \leq g(K)$, sufficiently large surgery coefficient $t$, the algebraic filtration $i$ on $C\{\max(i,j-m)=0\}$ corresponds to the $\Z$-filtration induced by $\mu_n$ on $\cf^\infty(S^3_t(K), \s_m)$ where $n > 2g(K)$.}
\label{fig:thickdiagonal}
\end{figure}
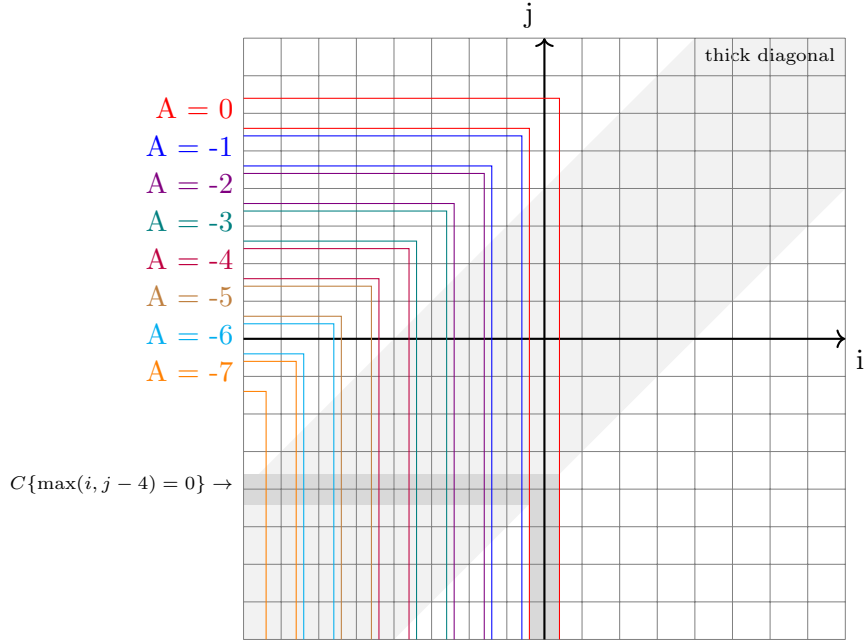

\begin{corollary} 
Let  $K$ be a knot in $S^3$ and fix $m, n \in \Z$. Then there exists $T = T(m, n) >0$ such that for all $t >T$ the following holds:
Up to a grading shift, the $p^{\text{th}}$ filtration level  of $\cfk^\infty(S^3_t(K), \mu_n, \s_m)$ is described in terms of the original $\mathbb{Z} \oplus \mathbb{Z}-$filtered knot Floer homology $\cfk^\infty(S^3, K)$ as 
\[ \text{max}(i, j - m - n)  = p .\]
\end{corollary}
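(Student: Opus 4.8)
The plan is to read the claimed description straight off Theorem~\ref{theorem:infinityfiltration}, which already records the entire $\Z \oplus \Z$ filtration $\mathcal{F}$ on $\cfk^\infty(S^3_t(K), \mu_n, \s_m)$ under the unfiltered identification with $\cfk^\infty(S^3, K)[\epsilon]$. The first step is to isolate which of the two coordinates of $\mathcal{F}$ is the filtration induced by the knot $\mu_n$. By the construction in the proof of Theorem~\ref{theorem:infinityfiltration}, the first coordinate is the $U$-filtration coming from the $\F[U, U^{-1}]$-module structure, while the second coordinate is exactly the Alexander filtration induced by $\mu_n \subset S^3_t(K)$, namely the filtration $A'_n$ identified via Lemma~\ref{lem:filtration} and then extended to $\cfk^\infty$ in the proof of Theorem~\ref{theorem:infinityfiltration}. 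Hence the $p$-th filtration level consists precisely of those generators $[x,i,j]$ whose second $\mathcal{F}$-coordinate equals $p$, and the homological grading shift $[\epsilon]$ accounts for the ``up to a grading shift'' clause in the statement.

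The second step is a short case-by-case check that this second coordinate collapses to the single expression $\max(i, j - m - n)$. I would evaluate the piecewise formula of Theorem~\ref{theorem:infinityfiltration} in each of its three regimes. When $j \leq m+i$, the second coordinate is $i$; since $j - m - n \leq i - n < i$, we get $\max(i, j-m-n) = i$. When $j = m + i + k$ with $1 \leq k < n$, the second coordinate is $j - m - k = i$; since $j - m - n = i + (k-n) < i$, again $\max(i, j-m-n) = i$. Finally, when $j \geq m + i + n$, the second coordinate is $j - m - n \geq i$, so $\max(i, j-m-n) = j - m - n$. In all three regimes the second coordinate agrees with $\max(i, j - m - n)$, which is exactly the asserted description of the $p$-th filtration level.

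There is no serious obstacle: the corollary is a bookkeeping consequence of Theorem~\ref{theorem:infinityfiltration}, and the only content is observing that the piecewise Alexander coordinate simplifies to a single $\max$. The one point that warrants care is matching conventions---confirming that it is the second $\mathcal{F}$-coordinate (rather than the first, which by the same computation simplifies to $\max(i, j-m)$) that records the filtration induced by $\mu_n$, and tracking the shift $[\epsilon] = [\epsilon(m,t)]$ so the identification is correctly stated ``up to a grading shift.'' Once these conventions are fixed, the three displayed inequalities complete the proof.
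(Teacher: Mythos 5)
Your proposal is correct and follows essentially the same route as the paper, whose entire proof of this corollary is that it follows from Theorem~\ref{theorem:infinityfiltration}; you have simply made explicit the case check that the Alexander (second) coordinate of $\mathcal{F}$ collapses to $\max(i,j-m-n)$, with the first coordinate being the $U$-filtration $\max(i,j-m)$. The three-regime verification and the convention-matching you describe are exactly the bookkeeping the paper leaves implicit.
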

That is, each Alexander filtration level $p$ of $\cfk^\infty(S^3_t(K), \mu_n, \s_m)$ is a ``hook" shaped region in $\cfk^\infty(S^3, K)$. 
\begin{proof} This follows from Theorem~\ref{theorem:infinityfiltration}.
\end{proof}

\begin{proposition}
Let $m \in \Z$ with $|m| \leq g(K)$ and let $n > 2g(K)$. For sufficiently large  surgery coefficient $t$, the Alexander filtration induced by $\mu_n$ on $\cf^\infty(S^3_t(K), \s_m)$ coincides with the algebraic $i$-filtration on $\cfk^\infty(S^3,K)$ under the correspondence given by $\Phi_m$.
\label{prop:thickdiagonal}
\end{proposition}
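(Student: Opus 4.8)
The plan is to reduce the statement to a single numerical inequality governing the ``hook'' region $\max(i, j-m-n)$ supplied by Theorem~\ref{theorem:infinityfiltration}, and then to verify that inequality using the width of the thick diagonal together with the two hypotheses $|m| \le g(K)$ and $n > 2g(K)$. No new geometric input is needed beyond Theorem~\ref{theorem:infinityfiltration}; the content is purely a bookkeeping check that the vertical arm of each hook falls outside the support of $\cfk^\infty(S^3,K)$.

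First I would recall the formula for the filtration. By Theorem~\ref{theorem:infinityfiltration} (equivalently, the preceding corollary), under the identification given by $\Phi_m$ the relative Alexander filtration $A'_n$ induced by $\mu_n$ on $\cf^\infty(S^3_t(K), \s_m)$ assigns to the generator $[x,i,j]$ of $\cfk^\infty(S^3,K)$ the value $\max(i,\, j-m-n)$. Consequently, it suffices to show that
\[
\max(i,\, j-m-n) = i \qquad \text{for every generator } [x,i,j] \text{ of } \cfk^\infty(S^3,K),
\]
that is, that $j-m-n \le i$, or equivalently $j-i \le m+n$, throughout the support of the complex.

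The key step is to feed in the genus bound. A generator $[x,i,j]$ has Alexander grading $A(x) = j-i$, and since $\cfk^\infty(S^3,K)$ is supported on the thick diagonal of width $2g(K)+1$ (see Figure~\ref{fig:thickdiagonal}), every generator satisfies $j-i \le g(K)$. On the other hand, the hypotheses give $m \ge -g(K)$ (from $|m| \le g(K)$) and $n > 2g(K)$, whence
\[
m+n \;>\; -g(K) + 2g(K) \;=\; g(K) \;\ge\; j-i.
\]
Thus $j-m-n < i$ strictly for every generator in the support, so the hook $\max(i,\, j-m-n)$ collapses to the vertical arm $i$, and $A'_n = i$ on all generators. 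This identifies the Alexander filtration induced by $\mu_n$ with the algebraic $i$-filtration on $\cfk^\infty(S^3,K)$ under $\Phi_m$, uniformly over all spin$^c$ structures $\s_m$ with $|m| \le g(K)$. I do not expect any genuine obstacle here: the only point requiring care is confirming that the width $2g(K)+1$ of the diagonal, combined with the two quantitative hypotheses, forces the strict inequality above (and hence the hook degeneration) at every lattice point of the support, including the extremal ones where $j-i = g(K)$.
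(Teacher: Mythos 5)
Your proposal is correct and follows essentially the same route as the paper: both reduce to the hook description $\max(i,\,j-m-n)$ of the $\mu_n$-filtration from Theorem~\ref{theorem:infinityfiltration}, and both observe that the hypotheses give $m+n > g(K)$, so that on the thick diagonal $|j-i|\le g(K)$ the horizontal arm of each hook misses the support and the filtration collapses to the $i$-coordinate. The paper phrases this geometrically (the corner $(p,\,m+n+p)$ of each hook lies above the diagonal), while you phrase it as the pointwise inequality $j-m-n<i$; these are the same argument.
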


\begin{proof} Since $\cfkhat(Y,K)$ has degree equal to the Seifert genus of the knot, $\cfk^\infty(Y,K)$ is supported along a thick diagonal of width $2g(K)+1$. 
By the hypothesis, we have
\[m+n > g(K).\]
Therefore the corner $(p, m+n+p)$ of the hook region $C\{\max(i, j-m-n) = p\}$ of each constant Alexander filtration level $p$ of $\cfk^\infty(S^3_tK, \mu_n, \s_m)$ lies above the thick diagonal along which $\cfk^\infty(Y,K)$ is supported.  See Figure~\ref{fig:thickdiagonal}. 
For spin$^c$ structures $\s_m$ where $|m| \leq g(K)$, this means that the Alexander filtration induced by $\mu_n$ on $\cfk^\infty(S^3_t(K), \mu_n, \s_m)$ coincides with the algebraic $i$-filtration on $\cfk^\infty(S^3,K)$ under the correspondence given by $\Phi_m$.
\end{proof}

Because the algebraic $i$-filtration is used to define concordance invariants (such as $a_1(K)$, which can be interpreted as an integer lift of the Hom $\varepsilon$ invariant \cite{epsilon}), the filtration induced by $\mu_n$ on $\cf^\infty(S^3_t(K), \s_m)$ can be used to study the concordance class of a knot $K$. We will see that we can extract concordance invariants of $K$ from $\cfk^\infty(S^3_t(K), \mu_n, \s_m)$.

\section{A knot concordance invariant }
\label{section-epsilon}
As an application for the results in the previous section on the $\Z$--filtration induced on $\cfhat(S^3_{N}(K), \s_m)$ by the $(n,1)$--cable of the meridian $\mu_n$, our main result in this section (Theorem \ref{a1}) shows that the concordance invariant $a_1(K)$ of Hom \cite{Hom-concgroup}, which has an algebraic definition in terms of maps on subquotient complexes of $\cfk^\infty(K)$, can be equivalently defined by studying filtered maps on the (hat version of the) Heegaard Floer homology groups induced by the two-handle attachment cobordism of large integer surgery along a knot $K$ in $S^3$ and the filtration induced by the knot $\mu_n$ inside of the surgered manifold. 

Our result is analogous to the statement that the concordance invariants $\nu(K)$ of Ozsv\'ath-Szab\'o \cite{OzSzRational} and $\varepsilon(K)$ of Hom \cite{epsilon} can be defined algebraically or in terms of maps on the (hat version of the) Heegaard Floer homology groups induced by the two-handle attachment cobordism of large integer surgery along a knot $K$ in $S^3$. 
Definition \ref{def:epsilon} gives an algebraic definition of $\varepsilon(K)$ in terms of certain chain maps on the subquotient complexes of the knot Floer chain complex $\cfk^\infty(K)$. 
Due to the Ozsv\'ath-Szab\'o large integer surgery formula \cite{HFK}, $\varepsilon(K)$ can equivalently be defined in terms of maps on the Heegaard Floer chain complexes induced by the two-handle attachment cobordism of (large integer) surgery. 

We begin by recalling the definition of the concordance invariants $\varepsilon(K)$.   Let $N$ be a sufficiently large integer relative to the genus of a knot $K$. Consider the map 
\[ F_s : \hfhat(S^3) \to \hfhat(S^3_{-N}(K), [s]),\]
induced by the two-handle cobordism $W^4_{-N}$. Here, $[s]$ denotes the restriction to $S^3_{-N}(K)$ of the Spin$^c$ structure $\mathfrak{s}_s$ over $W^4_{-N}$ with the property that 
\[ \langle c_1(\mathfrak{s}_s), [\widehat{F}] \rangle - N = 2s,\]
where $| s | \leq \frac{N}{2}$ and $\widehat{F}$ denotes the capped off Seifert surface in the four-manifold. We also consider the map 
\[ G_s : \hfhat(S^3_N(K), [s]) \to \hfhat(S^3),\]
induced by the two-handle cobordism $-W^4_N$. 

The maps $F_s$ and $G_s$ can be defined algebraically by studying certain natural maps on subquotient complexes of $\cfk^\infty(K)$, as in \cite{HFK}. The map $F_s$ is induced by the chain map 
\[C\{i=0\} \to C\{\min(i,j-s) = 0\}\]
consisting of quotienting by $C\{i=0, j<s\}$ followed by the inclusion.
Similarly, the map $G_s$ is induced by the chain map
\[C\{\max(i,j-s) = 0\} \to C\{i=0\} \]
consisting of quotienting by $C\{i<0, j=s\}$ followed by the inclusion.

\begin{definition}[\cite{epsilon}, \cite{Hom-concgroup}] \label{def:epsilon}
Let $\tau = \tau(K)$ be the Ozsv\'ath-Szab\'o concordance invariant. 
The invariant $\varepsilon(K)$ is defined as follows:
\begin{itemize}
\item $ \varepsilon(K) = 1$ if $F_\tau$ is trivial (in which case $G_\tau$ is necessarily non-trivial).
\item  $ \varepsilon(K) = -1$ if $G_\tau$ is trivial (in which case $F_\tau$ is necessarily non-trivial).
\item $ \varepsilon(K) = 0$ if $F_\tau$ and $G_\tau$ are both non-trivial.
\end{itemize}
\end{definition}

In \cite{Hom-concgroup}, Hom defines a concordance invariant $a_1(K)$ for knots with $\varepsilon(K) = 1$ that is a refinement of $\varepsilon(K)$. 

\begin{definition}[\cite{Hom-concgroup}]
If $\varepsilon(K) = 1$ ($F_\tau$ is trivial), define
\[a_1(K) = \min\{s \ | \ H_s: H_*(C\{i = 0\}) \to H_*(C\{\min(i,j-\tau)=0, i\leq s\}) \text{ is trivial}\}\]
\end{definition}

We extend this definition of $a_1(K)$ to all knots (to include knots with $\varepsilon(K) \neq 1$). 
Consider the maps
\begin{eqnarray*}
G_{-s, \tau} : C\{\max(i,j-\tau) =0, i \geq -s\} \to C\{i=0\} 
\\
F_{s, \tau}: C\{i=0\}  \to C\{\min(i,j-\tau) =0, i \leq s\}
\end{eqnarray*}

\begin{definition}
Given a knot $K$ inside $S^3$, define: 
\begin{eqnarray*}
a_1(K) = 
\begin{cases} 
\max \{-s \ |\  G_{-s, \tau} \text{ is trivial on homology\} }, &\mbox{if } \varepsilon(K) = -1; \\ 
0, & \mbox{if } \varepsilon(K) = 0;\\
\min \{s \ |\ F_{s, \tau}  \text{ is trivial on homology\} }, &\mbox{if } \varepsilon(K) = 1. \\ 
\end{cases}
\end{eqnarray*}
\end{definition}
Note that $a_1(K)$ only depends on the doubly-filtered chain homotopy type of the knot Floer chain complex $CFK^{\infty}(K)$, so it is a knot invariant.

\begin{remark}
When $\varepsilon(K) = 1$, the definition of $a_1(K)$ agrees with the invariant $a_1(K)$ defined in Lemma 6.1  in \cite{Hom-concgroup}. As remarked in \cite{Hom-concgroup}, $a_1(K)$ measures the ``length" of the horizontal differential hitting the special class generating the vertical homology of $\cfhat(S^3)$. Similarly, when $\varepsilon(K) = -1$, $a_1(K)$ measures the ``length" of the horizontal differential coming out of the special class generating the vertical homology of $\cfhat(S^3)$.
\end{remark}

Recall that the rationally null-homologous knot $\mu_n \subset S^3_t(K)$ induces a $\Z$-filtration of $\cfhat(S^3_t(K), \s_\tau)$ and $\cfhat(S^3_{-t}(K), \s_\tau)$, that is, a sequence of subcomplexes:
\[0 \subset \mathcal{F}_{bottom} \subset \mathcal{F}_{bottom+1} \subset \cdots \subset \mathcal{F}_{top-1} \subset \mathcal{F}_{top} = \cfhat(S^3_t(K), \s_\tau).\]
\[0 \subset \mathcal{F}'_{bottom} \subset \mathcal{F}'_{bottom+1} \subset \cdots \subset \mathcal{F}'_{top-1} \subset \mathcal{F}'_{top} = \cfhat(S^3_{-t}(K), \s_\tau).\]
Using Theorem \ref{hatfiltration} and Proposition \ref{prop:thickdiagonal}, an equivalent definition of $a_1(K)$ can be formulated in terms of the filtration $\mathcal{F}$ and $\mathcal{F}'$ induced by $\mu_n $ as a knot inside $S^3_t(K)$ and $S^3_{-t}(K)$. This interpretation of the invariant $a_1(K)$ offers a  topological perspective that complements the original algebraic definition of $a_1(K)$.

\begin{theorem} 
Let $n > 2g(K)$.
For sufficiently large  surgery coefficient $t$, the concordance invariant $a_1(K)$ is equal to
\label{a1}
\[ a_1(K) = \begin{cases}
\max \left\lbrace m \mid  \begin{lrcases}
 \cfhat(S^3_tK,\ s_\tau) / \mathcal{F}_{ \text{top}-1-m} \to \cfhat(S^3) 
\\
  \text{ induces a trivial map on homology }   
 \end{lrcases} \right \rbrace & \text{ if } \varepsilon(K) = -1, \\
\\
0 & \text{ if } \varepsilon(K) = 0, \\\\
\min \left\lbrace m \mid  \begin{lrcases}
 \cfhat(S^3)  \to  \mathcal{F}'_{ \text{bottom}+m} \subset \cfhat(S^3_{-t}K, \ \s_\tau) 
\\
  \text{ induces a trivial map on homology }   
 \end{lrcases} \right \rbrace & \text{ if } \varepsilon(K) = 1. 
\end{cases}
\]

\end{theorem}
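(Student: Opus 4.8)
The plan is to reduce the filtration-theoretic statement to the algebraic definition of $a_1(K)$ recorded just above, by matching the two-handle cobordism maps with the algebraic maps on subquotient complexes of $\cfk^\infty(S^3,K)$, and matching the filtration induced by $\mu_n$ with the relevant truncations of $C\{\max(i,j-\tau)=0\}$ and $C\{\min(i,j-\tau)=0\}$. The case $\varepsilon(K)=0$ is immediate, since $a_1(K)=0$ by definition on both sides, so the content lies in the two cases $\varepsilon(K)=\pm1$. Throughout I use that, by the large integer surgery formula \cite{HFK}, the map $\Phi_\tau$ identifies $\cfhat(S^3_t(K),\s_\tau)$ with $C\{\max(i,j-\tau)=0\}$ and carries the cobordism map $\hfhat(S^3_t(K),\s_\tau)\to\hfhat(S^3)$ to the map induced by $C\{\max(i,j-\tau)=0\}\to C\{i=0\}$; the analogous negative-surgery statement identifies $\cfhat(S^3_{-t}(K),\s_\tau)$ with $C\{\min(i,j-\tau)=0\}$ and the cobordism map $\hfhat(S^3)\to\hfhat(S^3_{-t}(K),\s_\tau)$ with the map induced by $C\{i=0\}\to C\{\min(i,j-\tau)=0\}$. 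Since $|\tau|\le g(K)$ always holds, the hypotheses of Proposition~\ref{prop:thickdiagonal} are met once $n>2g(K)$.

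For $\varepsilon(K)=-1$ I would argue as follows. By Theorem~\ref{hatfiltration} and Lemma~\ref{lem:filtration}, under $\Phi_\tau$ the $(n+1)$-step filtration $\mathcal{F}$ induced by $\mu_n$ has filtration levels $\mathcal{F}_{\mathrm{top}-k}\cong C\{i\le -k,\,j=\tau\}$. Consequently the quotient complex $\cfhat(S^3_t(K),\s_\tau)/\mathcal{F}_{\mathrm{top}-1-m}$ is identified with a truncation $C\{\max(i,j-\tau)=0,\ i\ge -m\}$, and the descended cobordism map is the corresponding truncated algebraic map $G_{-m,\tau}$. Proposition~\ref{prop:thickdiagonal} together with $n>2g(K)$ is precisely what guarantees that the geometric filtration is not truncated in the band of width $2g(K)+1$ along which $\cfk^\infty(S^3,K)$ is supported, so that every $G_{-m,\tau}$ relevant to $a_1(K)$ is realized by a genuine filtration level. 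Comparing the resulting extremal condition on $m$ with the algebraic formula $a_1(K)=\max\{-s\mid G_{-s,\tau}\text{ trivial on homology}\}$ then yields the claimed identity; care is needed here to orient the filtration correctly, since it is the monotonicity of triviality of $G_{-s,\tau}$ in the truncation parameter that converts the algebraic $\max\{-s\}$ into the stated $\max$ over $m$.

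For $\varepsilon(K)=1$ I would run the dual argument on the negative surgery $S^3_{-t}(K)$. Here one needs the negative-surgery analogues of Theorem~\ref{hatfiltration}, Lemma~\ref{lem:filtration}, and Proposition~\ref{prop:thickdiagonal}, identifying $\cfhat(S^3_{-t}(K),\s_\tau)$ with $C\{\min(i,j-\tau)=0\}$ and the filtration $\mathcal{F}'$ induced by $\mu_n$ with the bottom-up filtration by the subcomplexes $\mathcal{F}'_{\mathrm{bottom}+m}\cong C\{\min(i,j-\tau)=0,\ i\le m\}$. Under these identifications the cobordism map $\cfhat(S^3)\to\mathcal{F}'_{\mathrm{bottom}+m}$ is exactly the truncated algebraic map $F_{m,\tau}$, whose triviality holds precisely once the truncation contains the horizontal arrow into the generator of $H_*(C\{i=0\})$; the topological $\min$ over $m$ therefore matches the algebraic formula $a_1(K)=\min\{s\mid F_{s,\tau}\text{ trivial on homology}\}$.

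The main obstacle is the negative-surgery case together with the bookkeeping of filtration indices and signs. Theorem~\ref{hatfiltration}, Lemma~\ref{lem:filtration}, and Proposition~\ref{prop:thickdiagonal} are all phrased for the positive surgery $S^3_t(K)$ and the quotient complex $C\{\max(i,j-\tau)=0\}$, whereas the $\varepsilon(K)=1$ clause requires $S^3_{-t}(K)$, the subquotient $C\{\min(i,j-\tau)=0\}$, and the arrows reversed. I would obtain this either by re-running the winding-region disk count of Lemma~\ref{lem:filtration} with the roles of $w$ and $z_n$ (equivalently, the orientation of the induced Alexander grading) interchanged, or by appealing to the duality between $S^3_{-t}(K)$ and $-S^3_t(\overline{K})$ and the behavior of $\cfk^\infty$ under mirroring. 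A secondary but genuinely delicate point is to line up the geometric top and bottom indexing of $\mathcal{F}$ and $\mathcal{F}'$ with the algebraic truncation parameters, and to check that the overall grading shift $[\epsilon]$ and the $\spinc$ labeling conventions are compatible, so that the extremal values $\max\{-s\}$ and $\min\{s\}$ transcribe correctly into the stated $\max$ and $\min$ over $m$.
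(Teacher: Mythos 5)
Your proposal is correct and takes essentially the same approach as the paper: the paper's entire proof is the single observation that, since $|\tau| \leq g_4(K) \leq g(K)$, Proposition~\ref{prop:thickdiagonal} identifies the algebraic $i$-filtration on $\cfk^\infty(S^3,K)$ with the filtration induced by $\mu_n$ under the identification of Theorem~\ref{hatfiltration}, so the stated formula is a direct transcription of the algebraic definition of $a_1(K)$ via the maps $G_{-s,\tau}$ and $F_{s,\tau}$. Your write-up is in fact more careful than the paper's one-sentence proof, notably in flagging that the $\varepsilon(K)=1$ clause requires negative-surgery analogues of Theorem~\ref{hatfiltration}, Lemma~\ref{lem:filtration}, and Proposition~\ref{prop:thickdiagonal} (obtainable by mirroring duality or by re-running the winding-region argument with the roles of $w$ and $z_n$ interchanged), a point the paper leaves entirely implicit.
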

\begin{proof}
Since $|\tau| \leq g_4(K) \leq g(K)$, we can apply Proposition \ref{prop:thickdiagonal} which states that in the spin$^c$ structure $\s_\tau$, the algebraic $i$-filtration on $\cfk^\infty(S^3, K)$ coincides with the filtration induced by $\mu_n$ on $\cfhat(S^3_N(K), \s_\tau)$ under the identification of the two filtered chain complexes in Theorem \ref{hatfiltration}. 
\end{proof}

\begin{remark}
Recall that $a_1(K)$ is a concordance invariant (see Proposition \ref{a1conc}) that fits into a family of concordance invariants studied by Dai, Hom, Stoffregen and the author in \cite{DHSTCFK}. It would be interesting to see if an analogue of Theorem \ref{a1} exists for this entire family of algebraically defined invariants corresponding to the standard local representative (over $\F[U,V]/(UV)$) of the knot. 
\end{remark}

\begin{proposition}[\cite{Hom-concgroup}]
\label{a1conc}
The invariant $a_1(K)$ is a concordance invariant. 
\end{proposition}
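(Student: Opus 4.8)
The plan is to reduce concordance invariance of $a_1$ to two ingredients: the concordance invariance of $\varepsilon$, due to Hom \cite{epsilon}, and the observation recorded just above that $a_1(K)$ depends only on the doubly-filtered, $U$-equivariant chain homotopy type of $\cfk^\infty(K)$. Because $\varepsilon$ is a concordance invariant, two concordant knots always fall into the same one of the three cases $\varepsilon \in \{-1,0,1\}$ appearing in the definition of $a_1$, so it suffices to treat each case separately. The case $\varepsilon(K) = 0$ is immediate, since there $a_1 \equiv 0$.

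For the case $\varepsilon(K) = 1$, the invariant $a_1(K)$ is exactly Hom's invariant of \cite[Lemma 6.1]{Hom-concgroup}. I would prove its invariance through stable equivalence: if $K_1$ and $K_2$ are concordant, then $\cfk^\infty(K_1)\oplus A_1$ and $\cfk^\infty(K_2)\oplus A_2$ are filtered chain homotopy equivalent for some acyclic complexes $A_1, A_2$ (those with $H_*(A_i) = 0$ over $\F[U,U^{-1}]$). As $a_1$ is manifestly preserved by filtered chain homotopy equivalence, the crux is the identity $a_1(C\oplus A) = a_1(C)$ for $A$ acyclic. Here the point is that the maps $F_{s,\tau}$ and $G_{-s,\tau}$ in the definition are controlled by the distinguished generator of $\HF^\infty(S^3) = H_*(C^\infty) = \F[U,U^{-1}]$; since $A^\infty$ is acyclic it contributes nothing to $\HF^\infty$, so this generator, together with the filtration level at which its image under $F_{s,\tau}$ first becomes null-homologous, is unaffected by the acyclic summand.

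For the case $\varepsilon(K) = -1$, I would reduce to the previous case via the duality $\cfk^\infty(-K) \cong \cfk^\infty(K)^\ast$, where $-K$ is the concordance inverse (the reverse of the mirror). This duality negates $\varepsilon$ and $\tau$ and interchanges the quotient-then-include maps $G_{-s,\tau}$ and $F_{s,\tau}$, identifying the threshold that defines $a_1(K)$ when $\varepsilon(K)=-1$ with the threshold defining $a_1(-K)$ when $\varepsilon(-K) = +1$. Since $K_1\sim K_2$ implies $-K_1 \sim -K_2$, invariance in the $\varepsilon = -1$ case follows from the $\varepsilon = +1$ case already handled.

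The step I expect to be the main obstacle is the stable-equivalence argument for $\varepsilon = 1$. Although $a_1$ is plainly a filtered chain homotopy invariant, confirming that an acyclic summand does not shift the homology-level triviality thresholds requires a careful passage through the subquotient complexes $C\{\min(i,j-\tau)=0,\, i\le s\}$: these subquotients of $C\oplus A$ need not split the relevant homology maps as cleanly as one would like, and one must verify that the distinguished $\HF^\infty$-generator continues to detect the same threshold.
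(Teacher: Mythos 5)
Your reduction hinges on the lemma that $a_1(C \oplus A) = a_1(C)$ whenever $H_*(A) = 0$ over $\F[U,U^{-1}]$, and the justification you give for it conflates two different homologies. The maps $F_{s,\tau}$ and $G_{-s,\tau}$ in the definition of $a_1$ are maps out of (resp.\ into) the vertical complex $C\{i=0\}$, so ``trivial on homology'' is a condition involving $H_*(C\{i=0\})$, not $H_*(C^\infty) \cong \HF^\infty(S^3)$; the generator of $\HF^\infty$ is not even in the domain of $F_{s,\tau}$. Acyclicity of $A$ over $\F[U,U^{-1}]$ does not force $H_*(A\{i=0\}) = 0$: a single horizontal arrow is acyclic but has two-dimensional vertical homology. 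Concretely, take $C = \cfk^\infty(T_{2,3})$ (so $\tau = 1$, $\varepsilon = 1$, $a_1 = 1$) and let $A$ be freely generated over $\F[U,U^{-1}]$ by $p, q$ with $\partial q = p$, where $q$ sits at filtration $(3,5)$ and $p$ at $(0,5)$ (add the mirror vertical arrow if you want symmetry). Then $H_*(A) = 0$, yet $[p]$ is a nonzero class in $H_*\bigl((C\oplus A)\{i=0\}\bigr)$ whose image under $F_{s,\tau}$ remains nonzero in $H_*\bigl((C\oplus A)\{\min(i,j-\tau)=0,\, i\le s\}\bigr)$ for \emph{every} $s$: the only element whose differential hits $p$ is $q$, and $q$ never lies in that region since $\min(3, 5-\tau) \neq 0$. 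So with the literal definition (triviality of the whole map on homology), $a_1(C\oplus A)$ is not even finite, and your key lemma is false; for other placements of the arrow the naive $\tau$ and $\varepsilon$ of the direct sum shift as well. You correctly flagged this as the main obstacle, but it is not a technical verification that goes through --- it is where the argument breaks.

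There are two ways to repair it, and both amount to supplying the missing content rather than invoking acyclicity formally. Either (i) redefine $a_1$ on complexes-with-acyclic-summands so that it tracks only a distinguished vertical homology class, and then prove this extended invariant is a filtered homotopy invariant agreeing with the original one on knot complexes (this is Hom's local/$\varepsilon$-equivalence machinery); or (ii) do not use an arbitrary stable equivalence, but the specific one produced by the $\varepsilon = 0$ splitting --- Lemma 3.3 of \cite{epsilon} / Proposition 3.11 of \cite{hom-survey} --- in which the distinguished element $x$ generates \emph{both} the vertical and horizontal homology, so the complementary acyclic summand has vanishing vertical and horizontal homology, and then the direct-sum argument you sketch does go through. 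Option (ii) is essentially the paper's proof: it splits $\cfk^\infty(K_1 \# \overline{K_2}) \simeq \{x\} \oplus A$ with $x$ as above, tensors with $\cfk^\infty(K_2)$, and computes $a_1(K_2 \# K_1 \# \overline{K_2})$ in two ways via the K\"unneth formula, concluding $a_1(K_1) = a_1(K_2)$ without any case analysis in $\varepsilon$. Your $\varepsilon = 0$ case and the duality reduction of $\varepsilon = -1$ to $\varepsilon = +1$ are fine (the paper establishes $a_1(\overline{K}) = -a_1(K)$ by the same symmetry), but as written the central lemma for $\varepsilon = +1$ is a genuine gap.
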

\begin{proof}
Suppose $K_1$ and $K_2$ are concordant knots, i.e. $K_1 \# \overline{K_2}$ is slice. Then $\varepsilon( K_1 \# \overline{K_2} ) = 0$. By Proposition 3.11 in \cite{hom-survey}, we may find a basis for $\cfk^\infty(K_1 \# \overline{K_2})$ with a distinguished element $x$ that generates the homology $\hfk^\infty(K_1 \# \overline{K_2})$ and splits off as a direct summand of $\cfk^\infty(K_1 \# \overline{K_2})$.  Similarly, we can find a basis for $\cfk^\infty(K_2 \# \overline{K_2})$ with a distinguished element $y$ with the same properties. Then to compute $a_1 (K_2 \# K_1 \# \overline{K_2})$, by the Kunneth principle \cite{HFK} we can consider either chain complex:
\begin{eqnarray*}
\cfk^\infty(K_1 \# \overline{K_2}) \otimes_{\mathbb{Z}[U, U^{-1}]} \cfk^\infty (K_2) 
&\text{or}&
\cfk^\infty (K_1) \otimes_{\mathbb{Z}[U, U^{-1}]}  \cfk^\infty(K_2 \# \overline{K_2}).
\end{eqnarray*}
Using the special bases from above, the relevant summands to $a_1$ are
\begin{eqnarray*}
 \{x\} \otimes \cfk^\infty (K_2)   & \text{ or } &   \cfk^\infty (K_1) \otimes \{y\}.
 \end{eqnarray*}
 Thus, $a_1(K_2) = a_1 (K_2 \# K_1 \# \overline{K_2}) = a_1(K_1)$.
\end{proof}

\example[Homologically thin knots]
Model complexes for $\cfk^\infty$ of homologically thin knots are studied in \cite{petkova}. Petkova shows that if $\tau(K) = n$, the model complex contains a direct summand isomorphic to 
\begin{eqnarray*}
\cfk^\infty(T_{2,2n+1}) \text{ if } n > 0 & \text{and} & \cfk^\infty(T_{2,2n-1}) \text{ if } n < 0.
\end{eqnarray*}
This summand supports $H_*(\cfk^\infty(K))$ and thus determines the value of $a_1(K)$. It is easy to see from the complex that $a_1(K) = \sgn(\tau(K))$. 

\begin{proposition}
\label{prop:tilde}
The following are properties of $a_1(K)$:

\begin{enumerate}
\item[(1)]
If $K$ is smoothly slice, then $a_1(K)=0$.

\item[(2)]
$\sgn(a_1(K)) = \varepsilon(K)$. 

\item[(3)]
$a_1(K) = - a_1(\overline{K})$.

\item[(4)] \label{connect-sum-tilde-epsilon}
If $a_1(K) = 0$, then $a_1(K\#K') = a_1(K')$.

\end{enumerate}

\end{proposition}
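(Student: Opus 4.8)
The plan is to treat the four properties in turn, using the sign computation in (2) as the organizing fact. For (1), I would observe that a smoothly slice knot has $\tau(K)=0$ and $\varepsilon(K)=0$; since the definition of $a_1$ assigns the value $0$ whenever $\varepsilon(K)=0$, the claim is immediate. (Equivalently, it follows from concordance invariance, Proposition~\ref{a1conc}, together with $a_1(\text{unknot})=0$.)

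For (2), the case $\varepsilon(K)=0$ is immediate from the definition. For $\varepsilon(K)=1$ I would show $a_1(K)\geq 1$ by checking that $F_{0,\tau}$ is \emph{non}-trivial on homology, so that $s=0$ cannot lie in the set $\{s\mid F_{s,\tau}\ \text{trivial}\}$ whose minimum is $a_1(K)$. The key computation is that the capped target collapses: $C\{\min(i,j-\tau)=0,\ i\leq 0\}=C\{i=0,\ j\geq\tau\}$, so $F_{0,\tau}$ is exactly the quotient $C\{i=0\}\to C\{i=0\}/C\{i=0,\ j\leq\tau-1\}$. By the definition of $\tau$, the generator of $H_*(C\{i=0\})\cong\hfhat(S^3)$ is \emph{not} in the image of $H_*(C\{i=0,\ j\leq\tau-1\})$, so it survives the quotient and $F_{0,\tau}$ is nontrivial. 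Symmetrically, for $\varepsilon(K)=-1$ I would identify $C\{\max(i,j-\tau)=0,\ i\geq 0\}=C\{i=0,\ j\leq\tau\}$, so that $G_{0,\tau}$ is the inclusion-induced map $\iota_\tau$, nontrivial by the definition of $\tau$; hence $-s=0$ is excluded and $a_1(K)=\max\{-s\mid G_{-s,\tau}\ \text{trivial}\}<0$. This yields $\sgn(a_1(K))=\varepsilon(K)$ in all three cases.

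For (3), I would use the duality $\cfk^\infty(\overline{K})\cong\cfk^\infty(K)^{*}$ under which the $(i,j)$ filtration is reversed, $\tau(\overline{K})=-\tau(K)$, and $\varepsilon(\overline{K})=-\varepsilon(K)$. Dualizing interchanges the two families of capped maps, carrying $F_{s,\tau}$ for $K$ to $G_{-s,\,\tau(\overline{K})}$ for $\overline{K}$ (and vice versa); since we work over the field $\F$, a map is trivial on homology if and only if its dual is, by universal coefficients. Reading off the extremizing index, $\min\{s\mid F_{s,\tau}(K)\ \text{trivial}\}$ becomes $-\max\{-s\mid G_{-s,\tau(\overline{K})}(\overline{K})\ \text{trivial}\}$, i.e. $a_1(K)=-a_1(\overline{K})$; the $\varepsilon=0$ case is trivial. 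For (4), I would first note that by (2), $a_1(K)=0$ is equivalent to $\varepsilon(K)=0$. When $\varepsilon(K)=0$, the argument of Proposition~\ref{a1conc} applies verbatim: there is a basis of $\cfk^\infty(K)$ with a distinguished generator $x$ that generates $\hfk^\infty(K)$ and splits off $\cfk^\infty(K)$ as a direct summand. Computing $a_1(K\#K')$ via the Künneth formula $\cfk^\infty(K\#K')\cong\cfk^\infty(K)\otimes_{\F[U,U^{-1}]}\cfk^\infty(K')$, the summand relevant to $a_1$ is $\{x\}\otimes\cfk^\infty(K')$, filtered-isomorphic to $\cfk^\infty(K')$, so $a_1(K\#K')=a_1(K')$.

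The main obstacle I anticipate is the bookkeeping in (3): one must pin down precisely how mirroring dualizes the capped complexes $C\{\min(i,j-\tau)=0,\ i\leq s\}$ and $C\{\max(i,j-\tau)=0,\ i\geq -s\}$, and confirm that the induced maps on homology are interchanged with triviality preserved. The sign computation in (2) is the conceptual crux but becomes short once the two $s=0$ capped complexes are identified with the standard $\tau$-detecting complexes; properties (1) and (4) are then essentially formal consequences of (2) and of the $\varepsilon=0$ splitting already used for concordance invariance.
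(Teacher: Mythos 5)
Your proposal is correct and follows essentially the same route as the paper: (1) via $\varepsilon(K)=0$, (2) by identifying the $s=0$ capped complexes with the standard $\tau$-detecting complexes $C\{i=0,\ j\geq\tau\}$ and $C\{i=0,\ j\leq\tau\}$ so that triviality there would contradict the definition of $\tau$, (3) via the duality/symmetry of $\cfk^\infty$ under mirroring, and (4) via the $\varepsilon=0$ splitting of a distinguished generator together with the K\"unneth formula. You simply spell out details (the exactness argument for $F_{0,\tau}$ and the dualization bookkeeping) that the paper's terser proof cites or leaves as ``similarly.''
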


\begin{proof}[Proof of (1)] 
If $K$ is smoothly slice, then $\varepsilon (K) = 0$; therefore, $a_1(K) = 0$. 
\end{proof}

\begin{proof}[Proof of (2)] 
By construction, if $a_1(K) > 0 $, then $\varepsilon(K) = 1$; if $a_1(K) < 0$, then $\varepsilon(K) = -1$.

If $a_1(K) = 0$, we show that $\varepsilon(K) = 0$. Suppose $\varepsilon(K) = -1$. Then the vanishing of 
\[a_1 (K) = \max \{n \ |\ G_{n,\tau}  \text{ is trivial on homology\} } \]
implies that the map $G_{0,\tau}:C\{ i=0, j\leq \tau  \} \to  C\{i=0\}  $ is trivial on homology, which contradicts the definition of $\tau$. Similarly, $\varepsilon(K) \neq 1$ if $a_1(K) = 0$. 

Finally, according to \cite{epsilon}, $\varepsilon(K) = 0$ implies that $\tau(K) = 0 $.
\end{proof}

\begin{proof}[Proof of (3)] 
The symmetry properties of $\cfk^\infty$ of Section 3.5 in \cite{HFK} imply that $a_1(K) = - a_1(\overline{K})$.
\end{proof}

\begin{proof}[Proof of (4)] 
If $a_1(K) = 0$, $\varepsilon( K) = 0$. By Lemma 3.3 from \cite{epsilon}, we may find a basis for $\cfk^\infty(K)$ with a distinguished element $x$ which is the generator of both vertical and horizontal homology. Then $a_1(K \# K') $ can be computed from $ \{x\} \otimes \cfk^\infty(K')$.
\end{proof}

In fact, we can extend Proposition~\ref{connect-sum-tilde-epsilon}(4) to describe the behavior of $a_1$ under connect sum in many (but not all) cases. 
\begin{proposition}
\label{connectsumextended}


\item[(1)]
If $a_1(K_1) > 0$ and $a_1(K_2) < 0$ and ${a_1(K_1)} + {a_1(K_2)} < 0$, then \[a_1(K_1 \# K_2) =  a_1(K_1).\]
\item[(2)]
If $a_1(K_1) > 0$ and $a_1(K_2) < 0$ and ${a_1(K_1)} + {a_1(K_2)} > 0$, then \[a_1(K_1 \# K_2) =  a_1(K_2).\]
\item[(3)]
If $a_1(K_1) > 0$ and $a_1(K_2) > 0$, then $a_1(K_1 \# K_2) = \min(a_1(K_1), a_1(K_2))$.
\item[(4)]
If $a_1(K_1) < 0$ and $a_1(K_2) < 0$, then $a_1(K_1 \# K_2) = \max(a_1(K_1), a_1(K_2))$.
\end{proposition}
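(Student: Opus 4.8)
The plan is to combine the Künneth formula for knot Floer homology with Hom's structural description of $\cfk^\infty$ and the characterization of $a_1$ as the signed length of the horizontal arrow incident to the generator of the vertical homology (cf. the Remark following Theorem~\ref{a1}). Throughout I work over $\F[U,U^{-1}]$ and use that, by \cite{HFK}, $\cfk^\infty(K_1 \# K_2) \simeq \cfk^\infty(K_1)\otimes \cfk^\infty(K_2)$ as $\Z\oplus\Z$-filtered complexes. Since $a_1$ depends only on the filtered chain homotopy type, I first replace each factor by a reduced representative: using a vertically simplified basis (as in \cite{epsilon,hom-survey}) I isolate the distinguished generator $x_0^{(i)}$ generating the vertical homology $H_*(C\{i=0\}) = \hfhat(S^3)$ of $K_i$, together with its extremal horizontal arrow. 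Writing $n_i := |a_1(K_i)|$, recall that $\varepsilon(K_i) = 1$ exactly when $x_0^{(i)}$ is the target of an incoming horizontal arrow from some $y_i$ of shortest length $n_i$, while $\varepsilon(K_i) = -1$ exactly when $x_0^{(i)}$ is the source of an outgoing horizontal arrow to some $y'_i$ of shortest length $n_i$; these are precisely the lengths at which the maps $F_{s,\tau}$ and $G_{-s,\tau}$ become trivial on homology.

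The second step identifies the relevant data in the tensor product. By the Künneth formula applied to the vertical complexes, the class $z := x_0^{(1)}\otimes x_0^{(2)}$ generates the vertical homology of $\cfk^\infty(K_1\#K_2)$, so $a_1(K_1\#K_2)$ is governed by the horizontal arrows incident to $z$. Applying the Leibniz rule to the tensor differential, each factor contributes one horizontal arrow at $z$: a factor with $\varepsilon = 1$ contributes an incoming arrow, namely $y_i\otimes x_0$ of the other factor mapping to $z$, lying to the right of $z$ at horizontal distance $n_i$; a factor with $\varepsilon = -1$ contributes an outgoing arrow, $z$ mapping to $x_0\otimes y'_i$, lying to the left of $z$ at distance $n_i$. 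The whole computation then reduces to understanding the fate of these (at most two) arrows.

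The third step is the case analysis, which I organize around the unified claim that $|a_1(K_1\#K_2)| = \min(n_1,n_2)$ with the sign inherited from the factor of smaller absolute value. When both factors have $\varepsilon = 1$ (statement (3)), $z$ is a horizontal cycle hit by two incoming arrows of lengths $n_1,n_2$; evaluating $F_{s,\tau}$ on the hook complex $C\{\min(i,j-\tau) = 0\}$ centered at $z$, the class $[z]$ becomes a boundary as soon as the shorter arrow is included, giving $a_1(K_1\#K_2) = \min(a_1(K_1),a_1(K_2))$. Statement (4) is the mirror: it follows either by the dual argument on $C\{\max(i,j-\tau)=0\}$ or directly from $a_1(\overline{K}) = -a_1(K)$ of Proposition~\ref{prop:tilde}(3), since $\overline{K_1\#K_2}=\overline{K_1}\#\overline{K_2}$ has both $\varepsilon=1$ and reduces to (3), yielding $\max$. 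For the mixed statements (1) and (2), $z$ carries one incoming arrow (length $n_1$) on its right and one outgoing arrow (length $n_2$) on its left; I compute $[z]$ in the hook and co-hook subquotients centered at $z$ and show that the shorter arrow survives in the relevant subquotient while the longer arrow is quotiented away. Thus $F_{\tau}$ is trivial precisely when $n_1 < n_2$ (so $\varepsilon(K_1\#K_2)=1$ and $a_1 = n_1 = a_1(K_1)$), and $G_{\tau}$ is trivial precisely when $n_2 < n_1$ (so $\varepsilon = -1$ and $a_1 = -n_2 = a_1(K_2)$); the hypothesis $a_1(K_1)+a_1(K_2)\neq 0$ is exactly the strict inequality $n_1\neq n_2$ that selects one of these two alternatives.

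The main obstacle is the mixed-sign computation in the final step: one must control the interaction, under the tensor differential, of the incoming and outgoing horizontal arrows at $z$, and verify that in the hook (respectively co-hook) subquotient the longer arrow genuinely drops out while the shorter one makes $[z]$ a boundary (respectively kills its homology class). This is where a carefully chosen simplified basis for each factor is essential, in order to rule out that the cross-terms produced by the Leibniz rule either create an unexpected shorter arrow or cancel the expected one. The excluded case $a_1(K_1)+a_1(K_2)=0$ is precisely where the incoming and outgoing arrows have equal length and can cancel, so that $\varepsilon(K_1\#K_2)$ may drop to $0$ and $a_1$ is no longer determined by $a_1(K_1)$ and $a_1(K_2)$ alone --- consistent with the proposition describing the behavior in many but not all cases.
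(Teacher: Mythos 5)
Your proposal is correct and follows essentially the same route as the paper: the K\"unneth formula, Hom's simplified bases, and the four-generator square $x_0y_0,\ x_1y_0,\ x_0y_1,\ x_1y_1$ tracked through the subquotients $C\{\min(i,j-\tau)=0,\ i\le s\}$ and $C\{\max(i,j-\tau)=0,\ i\ge -s\}$; where the paper disposes of the mixed-sign cases (1)--(2) by citing the proof of Lemma 6.3 of \cite{Hom-concgroup} and of case (4) by mirroring, you simply unpack that same computation directly. The one point you should make explicit is that the basis required is that of Lemma 6.2 of \cite{Hom-concgroup} (a single horizontal arrow into $x_0$, no other arrows incident to $x_0$, and no other horizontal arrows incident to $x_1$), not merely a vertically simplified basis---this is exactly what eliminates the Leibniz cross-terms you flag as the main obstacle, since every such term lands outside the hook regions and hence vanishes in the relevant subquotients.
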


\begin{proof}
Note that we use $-K$ to denote the mirror of a knot $K$. 

\item[(1)] See the proof of Lemma 6.3 of \cite{Hom-concgroup}.

\item[(2)] 
The mirrors $-K_1$ and $-K_2$ satisfy the hypothesis of (1), so 
\[a_1(-K_1 \# -K_2) = a_1(-K_2).\]
Apply the symmetry property of $a_1$ under mirroring (\ref{prop:tilde}):
\[-a_1(K_1 \# K_2) = - a_1(K_2)\]
as desired. 

\item[(3)]
By Lemma 6.2 of \cite{Hom-concgroup}, there exists a basis $\{x_i\}$ over $\mathbb{F}[U, U^{-1}]$ for $\cfk^\infty(K_1)$ with basis elements $x_0$ and $x_1$ with the property that
\begin{enumerate}
\item There is a horizontal arrow of length $a_1$ from $x_1$ to $x_0$.
\item There are no other horizontal arrows or vertical arrows to or from $x_0$.
\item There are no other horizontal arrows to or from $x_1$.
\end{enumerate}
Similarly, we may find a basis $\{y_i\}$ over $\mathbb{F}[U, U^{-1}]$ for $\cfk^\infty(K_2)$ with basis elements $y_0$ and $y_1$ with the above properties. Without loss of generality, assume that $a_1(K_1) \leq a_1(K_2)$.

Notice $x_0y_0$ generates the vertical homology $H_*(C(\{i=0\}))$ of $\cfk^\infty(K_1 \# K_2)$. Let $\tau = \tau(K_1 \# K_2)$.
Consider the subquotient complex 
\[ A = C\{\min(i,j-\tau) = 0\}.\]
There is a direct summand of $A$ consisting of the generators $x_0y_0$, $x_0y_1$, $x_1y_0$, and $x_1y_1$, and four horizontal arrows as shown in Figure \ref{fig:a}.
The arrow $x_1y_0$ to $x_0y_0$ has length $a_1(K_1)$. Clearly, $\varepsilon(K_1 \# K_2) = 1$ and $a_1(K_1\#K_2) = a_1(K_1)$.
\begin{figure}[!h]
\begin{tikzpicture}
	 \useasboundingbox (-2, -0.5) rectangle (10.5, .5);
        \filldraw (0, 0) circle (1.5pt) node[] (x0y0){};
        \draw (0, .2) node[] (){$x_0y_0 $};
        \filldraw (3, .1) circle (1.5pt) node[] (x1y0){};
        \draw (3, .3)  node[] (){$ x_1y_0$};
        \filldraw (7, -.1) circle (1.5pt) node[] (x0y1){};
        \draw (7, -.3) node[] (){$ x_0y_1$};
        \filldraw (10, 0) circle (1.5pt) node[] (x1y1){};
        \draw (10, .2) node[] (){$ x_1y_1$};
	\draw [->] (x0y1) -- (x0y0);
	\draw [->] (x1y0) -- (x0y0);
	\draw [->] (x1y1) -- (x1y0);
	\draw [->] (x1y1) -- (x0y1);
\end{tikzpicture}
\caption[]{A direct summand of $A=C\{\min(i,j-\tau) = 0\}$ in Proposition \ref{connectsumextended}(3)} This is the summand that is relevant for computing $a_1$, as it contains the generator $x_0y_0$ of vertical homology $H_*(C\{i=0\})$.
\label{fig:a}
\end{figure}
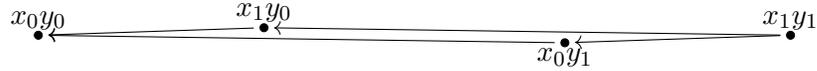

\item[(4)] The mirrors $-K_1$ and $-K_2$ satisfy the hypothesis of (3). So
\begin{eqnarray*}
-a_1(K_1\# K_2) =a_1(-K_1\# -K_2) = \min(a_1(-K_1), a_1(-K_2)) = \min(-a_1(K_1), -a_1(K_2)) \\
= -\max(a_1(K_1), a_1(K_2)).
\end{eqnarray*}
\end{proof}

Proposition~\ref{connectsumextended} can be rewritten as the following. 

\begin{proposition}
If $a_1(K_1) \neq 0 $ and $a_1(K_2)\neq 0$:
\begin{enumerate}
\item[(1)]
If 
$a_1(K_1) + a_1(K_2) < 0 $, then 
$a_1(K_1 \# K_2) = \max (a_1(K_1), a_1(K_2))$.
\item[(2)]
If ${a_1(K_1)} + {a_1(K_2)} > 0$, then $a_1(K_1 \# K_2) = \min (a_1(K_1), a_1(K_2))$.
\end{enumerate}
\end{proposition}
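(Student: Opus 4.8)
The plan is to deduce this statement directly from Proposition~\ref{connectsumextended} by a case analysis on the signs of $a_1(K_1)$ and $a_1(K_2)$, which are nonzero by hypothesis. The key bookkeeping observation is that once the two signs are fixed, the regime of $a_1(K_1) + a_1(K_2)$ is partly forced, and in each regime exactly one of $a_1(K_1)$, $a_1(K_2)$ realizes the relevant $\max$ or $\min$, matching the conclusion of one of the four cases of Proposition~\ref{connectsumextended}.

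First I would dispose of the equal-sign cases, where the sign of the sum is automatic. If $a_1(K_1) > 0$ and $a_1(K_2) > 0$, then $a_1(K_1) + a_1(K_2) > 0$, placing us in case (2) of the statement, whose predicted value $\min(a_1(K_1), a_1(K_2))$ is precisely the conclusion of Proposition~\ref{connectsumextended}(3). Dually, if $a_1(K_1) < 0$ and $a_1(K_2) < 0$, then the sum is negative, we are in case (1), and the predicted $\max(a_1(K_1), a_1(K_2))$ is exactly Proposition~\ref{connectsumextended}(4). So the equal-sign situations are immediate.

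Next I would treat the mixed-sign case. Since connect sum is commutative and $a_1$ depends only on the filtered chain homotopy type of $\cfk^\infty$, we have $a_1(K_1 \# K_2) = a_1(K_2 \# K_1)$, so without loss of generality I may assume $a_1(K_1) > 0 > a_1(K_2)$. Then $\max(a_1(K_1), a_1(K_2)) = a_1(K_1)$ and $\min(a_1(K_1), a_1(K_2)) = a_1(K_2)$. If $a_1(K_1) + a_1(K_2) < 0$ we are in case (1) of the statement, which now reads $a_1(K_1 \# K_2) = a_1(K_1)$, and this is exactly Proposition~\ref{connectsumextended}(1). If instead $a_1(K_1) + a_1(K_2) > 0$ we are in case (2), reading $a_1(K_1 \# K_2) = a_1(K_2)$, which is Proposition~\ref{connectsumextended}(2). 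This exhausts all sign patterns.

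There is no genuine analytic obstacle here, since the statement is a formal repackaging of the preceding proposition; the only point requiring care is the correct matching of $\max$/$\min$ against the summand selected in each regime of Proposition~\ref{connectsumextended}, together with the reduction-by-commutativity in the mixed case. I would also note for completeness that the boundary value $a_1(K_1) + a_1(K_2) = 0$ (which can occur only in the mixed case) is left uncovered by both formulations, so the equivalence is exact and nothing is lost in the restatement.
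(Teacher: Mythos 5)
Your proof is correct and takes essentially the same approach as the paper: the paper gives no separate argument, presenting this proposition simply as a rewriting of Proposition~\ref{connectsumextended}, and your sign-by-sign case check (equal signs forcing the sign of the sum, the commutativity reduction in the mixed case, and the note that the zero-sum boundary arises only with mixed signs) is exactly the verification that this rewriting entails.
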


\begin{remark}
If $a_1(K) \neq 0 $ and $a_1(K')\neq 0$, 
and 
$a_1(K) + a_1(K') = 0 $, then $a_1(K\#K') $ is indeterminate. The next two examples illustrate this case.
\end{remark}

\example 
The connect sum of any knot $K$ with the reverse of its mirror $-{K}$, i.e. the inverse of $K$ in the concordance group $\mathcal{C}$, has vanishing $a_1(K \# -{K}) = 0$.

\example
The full knot Floer chain complexes $\cfk^\infty$ of the mirror $-T_{2,3; 2,5}$ of the $(2,5)$-cable of the torus knot $T_{2,3}$, the torus knot $T_{2,9}$, and the connect sum $-T_{2,3; 2,5} \# T_{2,9}$ are described in \cite{Hom-Wu}. It is easy to see that
$a_1(-T_{2,3; 2,5}) = -1$, $a_1(T_{2,9}) = 1$, and 
$a_1(-T_{2,3; 2,5} \# T_{2,9}) = -1$.
\\

We conclude with some computations of the $a_1$--invariant. 
\example
In \cite{homnote} Hom produces the relevant summand of $\cfk^\infty$ for computing $\varepsilon$ and hence $a_1$ for the knot $T_{4,5} \# - T_{2,3; 2,5}$. It is easy to see that $a_1(T_{4,5} \# - T_{2,3; 2,5}) = 2$.

\example
The Conway knot $C_{2,1}$ has $a_1(C_{2,1}) = 0$. According to \cite{peters}, the knot Floer chain complex $\cfk^\infty(C_{2,1})$ is generated as a $\mathbb{F}[U,U^{-1}]-$module by a single isolated $\mathbb{F}$ at the origin plus a collection of null-homologous ``boxes".

\example
The knot Floer chain complex of an L-space knot is a given by Theorem 2.1 in \cite{Upsilon}. If $K$ is an L-space knot, with Alexander polynomial 
\begin{eqnarray*}
\Delta_K(t) =  \sum_{i=0}^k (-1)^i t^{n_i},
\end{eqnarray*}
where $n_0 > n_1 > \cdots > n_k$,
then $a_1(K) = n_0-n_1$ by Lemma 6.5 \cite{Hom-concgroup}. 

\bibliographystyle{amsalpha}
\bibliography{references}

\providecommand{\bysame}{\leavevmode\hbox to3em{\hrulefill}\thinspace}
\providecommand{\MR}{\relax\ifhmode\unskip\space\fi MR }
\providecommand{\MRhref}[2]{%
  \href{http://www.ams.org/mathscinet-getitem?mr=#1}{#2}
}
\providecommand{\href}[2]{#2}
\begin{thebibliography}{DHST19}

\bibitem[DHST19]{DHSTCFK}
Irving Dai, Jennifer Hom, Matthew Stoffregen, and Linh Truong, \emph{More
  concordance homomorphisms from knot {F}loer homology}, 2019, preprint,
  arXiv:1902.03333.

\bibitem[Hed07]{hedden}
Matthew Hedden, \emph{Knot {F}loer homology of {W}hitehead doubles}, Geom.
  Topol. \textbf{11} (2007), 2277--2338. \MR{2372849 (2008m:57030)}

\bibitem[HKL16]{hedden-kim-livingston}
Matthew Hedden, Se-Goo Kim, and Charles Livingston, \emph{Topologically slice
  knots of smooth concordance order two}, J. Differential Geom. \textbf{102}
  (2016), no.~3, 353--393. \MR{3466802}

\bibitem[Hom14a]{epsilon}
Jennifer Hom, \emph{Bordered {H}eegaard {F}loer homology and the tau-invariant
  of cable knots}, J. Topol. \textbf{7} (2014), no.~2, 287--326. \MR{3217622}

\bibitem[Hom14b]{Hom-concgroup}
\bysame, \emph{The knot {F}loer complex and the smooth concordance group},
  Comment. Math. Helv. \textbf{89} (2014), no.~3, 537--570. \MR{3260841}

\bibitem[Hom15]{hom-survey}
\bysame, \emph{{A survey on Heegaard Floer homology and concordance}},
  arXiv:1512.00383 (2015).

\bibitem[Hom16]{homnote}
\bysame, \emph{A note on the concordance invariants epsilon and upsilon}, Proc.
  Amer. Math. Soc. \textbf{144} (2016), no.~2, 897--902. \MR{3430863}

\bibitem[HW14]{Hom-Wu}
Jennifer Hom and Zhongtao Wu, \emph{{Four-ball genus bounds and a refinement of
  the Ozsvath-Szabo tau-invariant}}, arXiv:1401.1565 (2014).

\bibitem[OS04]{HFK}
Peter Ozsv{\'a}th and Zolt{\'a}n Szab{\'o}, \emph{Holomorphic disks and knot
  invariants}, Adv. Math. \textbf{186} (2004), no.~1, 58--116. \MR{2065507
  (2005e:57044)}

\bibitem[OS11]{OzSzRational}
\bysame, \emph{Knot {F}loer homology and rational surgeries}, Algebr. Geom.
  Topol. \textbf{11} (2011), no.~1, 1--68. \MR{2764036}

\bibitem[OSS14]{Upsilon}
Peter Ozsv{\'a}th, Andr{\'a}s Stipsicz, and Zolt{\'a}n Szab{\'o},
  \emph{{Concordance homomorphisms from knot Floer homology}}, arXiv:1407.1795
  (2014).

\bibitem[{Pet}10]{peters}
Thomas~D. {Peters}, \emph{{A concordance invariant from the Floer homology of
  +/- 1 surgeries}}, arXiv:1003.3038 (2010).

\bibitem[Pet13]{petkova}
Ina Petkova, \emph{Cables of thin knots and bordered {H}eegaard {F}loer
  homology}, Quantum Topol. \textbf{4} (2013), no.~4, 377--409. \MR{3134023}

\end{thebibliography}

\end{document}